\documentclass{article}
\usepackage[T1]{fontenc}
\usepackage[utf8]{inputenc}
\usepackage{lmodern}

\usepackage{amsmath, amssymb, amsthm, mathtools, bbm}
\usepackage{mathrsfs}

\usepackage[margin=1.15in]{geometry}
\usepackage{enumitem}
\usepackage{xcolor}
\usepackage{hyperref}
\usepackage[nameinlink,capitalize]{cleveref}

\hypersetup{
    colorlinks=true,
    linkcolor=blue!60!black,
    citecolor=blue!60!black,
    urlcolor=blue!60!black
}

\theoremstyle{plain}
\newtheorem{theorem}{Theorem}[section]
\newtheorem{proposition}[theorem]{Proposition}
\newtheorem{lemma}[theorem]{Lemma}
\newtheorem{corollary}[theorem]{Corollary}

\theoremstyle{definition}

\newtheorem{remark}[theorem]{Remark}

\crefname{theorem}{Theorem}{Theorems}
\crefname{proposition}{Proposition}{Propositions}
\crefname{lemma}{Lemma}{Lemmas}
\crefname{corollary}{Corollary}{Corollaries}
\crefname{conjecture}{Conjecture}{Conjectures}
\crefname{claim}{Claim}{Claims}
\crefname{definition}{Definition}{Definitions}
\crefname{example}{Example}{Examples}
\crefname{notation}{Notation}{Notations}
\crefname{question}{Question}{Questions}
\crefname{problem}{Problem}{Problems}
\crefname{remark}{Remark}{Remarks}
\crefname{equation}{Equation}{Equations}
\crefname{section}{Section}{Sections}

\newcommand{\EE}{\mathbb{E}}

\newcommand{\NN}{\mathbb{N}}

\newcommand{\PP}{\mathbb{P}}

\newcommand{\RR}{\mathbb{R}}

\newcommand{\ZZ}{\mathbb{Z}}

\newcommand{\cD}{\mathcal{D}}
\newcommand{\cE}{\mathcal{E}}

\newcommand{\cM}{\mathcal{M}}
\newcommand{\cN}{\mathcal{N}}

\newcommand{\cP}{\mathcal{P}}

\newcommand{\cS}{\mathcal{S}}
\newcommand{\cT}{\mathcal{T}}

\newcommand{\cX}{\mathcal{X}}

\newcommand{\Cov}{\mathrm{Cov}}
\newcommand{\dCov}{\mathrm{Dvar}}
\newcommand{\Cor}{\mathrm{Cor}}
\newcommand{\sdiam}{\mathrm{sdiam}}
\newcommand{\ddiag}{\mathrm{ddiag}}
\newcommand{\Var}{\mathrm{Var}}

\newcommand{\brac}[1]{\left({#1}\right)}
\newcommand{\bracc}[1]{\left[{#1}\right]}

\DeclarePairedDelimiter{\abs}{\lvert}{\rvert}

\newcommand{\eps}{\varepsilon}

\newcommand{\op}{\mathrm{op}}

\newcommand{\one}{\mathbf{1}}

\definecolor{AfonsoBlue}{RGB}{30,65,123}

\title{
Mixing of Glauber Dynamics on High Overlap Gibbs Measures
}

\author{
  Afonso S. Bandeira\thanks{ASB: Department of Mathematics, ETH Zürich, Rämistrasse 101, 8092 Zurich, Switzerland. \texttt{bandeira@math.ethz.ch}} \and 
  Ahmed El Alaoui\thanks{AE: Department of Statistics and Data Science, Cornell University, Ithaca, New York, USA. \texttt{elalaoui@cornell.edu}} \and
  Almut Rödder
\thanks{AR: Department of Mathematics, ETH Zürich, Rämistrasse 101, 8092 Zurich, Switzerland. \texttt{almutmagdalena.roedder@ifor.math.ethz.ch}}
}

\date{\today}

\begin{document}

\maketitle

\begin{abstract}

We show fast mixing of Glauber dynamics for certain quadratic Gibbs measures with large external fields.    
The main ingredient is an overlap condition that allows us to control correlation matrices uniformly over all pinnings, by controlling norms of small submatrices of the interaction matrix. Using stochastic localization, we then obtain a lower bound on the spectral gap and, consequently, polynomial-time mixing of Glauber dynamics.

As a direct application, we consider the Sherrington–Kirkpatrick model, whose interaction matrix is a scaled GOE matrix. For this model, we show that for any fixed finite inverse temperature~$\beta$, there exists a strength of external field $\theta$, not depending on the size of the system, for which Glauber dynamics mixes in polynomial time (with high probability on the draw of the interaction matrix). 
\end{abstract}

\setcounter{tocdepth}{2}
\tableofcontents

\section{Introduction}

The main object of study of this paper are quadratic Gibbs measures over the binary hypercube $\{\pm 1\}^n$
\begin{equation}\label{eq:quadraticGibbsMeasure1}
\nu(x) \propto \exp\brac{\frac{1}{2}x^\top J x + h^\top x}
\end{equation}
where $J \in \RR^{n \times n}$ is a symmetric interaction matrix and $h\in \RR^n$ an external field. Probability distributions of the form~\eqref{eq:quadraticGibbsMeasure1} are a central object of study in several fields, including Statistical Physics where different choices of $J$ and $h$ capture models, such as Curie-Weiss, Ising, Sherrington-Kirkpatrick, Edwards-Anderson, among others. In Statistics, such distributions arise as posterior distributions of various statistical models such as $\ZZ_2$ Synchronization and Community Detection in the Stochastic Block Model.

One of the most important questions surrounding these probability distributions is whether they can be sampled efficiently, meaning in time polynomial in $n$. A particularly important sampling algorithm is Glauber Dynamics, a Markov Chain Monte Carlo algorithm where, at each iteration, a random spin gets picked and its spin redrawn at random with respect to its conditional distribution (also referred to as a heat bath on one coordinate).

There is a rich line of work showing fast mixing of Glauber Dynamics under various conditions on $J$ (e.g. (\cite{eldan2022spectral,chen2022localizationschemesframeworkproving,anari24trickle}). Particularly remarkable is the spectral radius condition of Eldan, Koehler, and Zeitouni which guarantees fast mixing of Glauber from any starting point as long as the spectral diameter of $J$ is smaller than $1$, for any external field $h$.

The model that serves as the main motivation for the results of this paper is the celebrated Sherrington-Kirkpatrick model, where the interaction matrix has random Gaussian entries. More precisely, the Sherrington-Kirkpatrick Gibbs measure at inverse temperature $\beta$ and with external field $\theta\one$ is the probability measure
        \[
    \nu(x) \propto \exp\left(\frac{\beta}2x^\top W x + \theta \one^\top x\right),
        \]
    where $W\sim\mathrm{GOE}$ (symmetric, $W_{ij}\sim\mathcal{N}(0,1/n)$ for $i\neq j$ and $W_{ii}\sim\mathcal{N}(0,2/n)$).

The spectral diameter condition of~\cite{eldan2022spectral}, together with the observation that $\|\beta W\|\sim 2 \beta$ with high probability, implies fast mixing of cold-started Glauber dynamics for $\beta<\frac14$ (with high probability on the randomness of the interaction matrix). This has since been improved to $\beta < 0.295$ in~\cite{anari24trickle}. It is conjectured that fast mixing may hold for any $\beta<1$ (see Open Problem 15 in~\cite{randomstrasse101-2024}). 
Beyond Glauber dynamics sampling algorithms based on stochastic localization \cite{el2022sampling} are known to converge to the target distribution in Wasserstein distance \cite{el2022sampling, celentano2024sudakov} in the full regime $\beta <1$, and a recent result suggests a sampling algorithm that converges in total variation distance for $\beta < 0.5$~\cite{davies2026potential}.
However, in the setting where there is no external field, it is expected that mixing is super-polynomially slow for $\beta >1$, where the model is expected to enter a replica symmetry breaking region~\cite{de1978stability,lopatto2026replica}.
Recent progress in this direction includes the stable-algorithm obstruction~\cite{el2022sampling} in the full regime $\beta >1$ and exponentially slow worst-case Glauber mixing for sufficiently large $\beta$ ~\cite{sellke2025exponentially}.

On the other hand, replica symmetry can hold beyond $\beta >1$, provided that the external field strength $\theta$ is sufficiently large. In particular, for every $\beta \geq 0$ there exists $\theta \in \RR$ that places the model in the replica symmetric phase (the smallest such $\theta$ is known as the Almeida-Thouless (AT) line, for which replica symmetry has been predicted in~\cite{de1978stability} and recently shown in~\cite{lopatto2026replica}). While approximate message passing algorithms have been shown to quickly converge to so-called TAP solutions~\cite{bolthausen2014iterative} in this regime, polynomial time mixing of local Markov chains like Glauber dynamics in the low-temperature regime below the AT line remained a long-standing open question.

In this work, we address this question and identify a region that is covered by the predictions of~\cite{de1978stability} where fast mixing of Glauber dynamics can be shown.
While our methods do not appear to be strong enough to establish fast mixing of Glauber dynamics up to the AT line, they do establish the qualitative fact: For every inverse temperature $\beta\geq 0$, there exists a strength of the external field $\theta \geq 0$ (not depending on $n$) for which SK at that temperature and external field enjoys fast Glauber mixing from any initialization. Establishing this all the way to the AT line remains an open problem.

Most existing results showed fast mixing regardless of the external field. In the setting of the SK model at $\beta >1$ and large $\theta$ such a statement cannot hold: otherwise it would be possible to take an external field that cancels out the existing one and moves the model to the replica symmetry breaking side of the AT-line. In contrast, our arguments explicitly exploit the large external field. Such an external field forces draws of the model to be highly magnetized and we leverage this to control the correlation matrices arising in the spectral independence~\cite{anari2021spectral}, or stochastic localization with pinnings~\cite{chen2022localizationschemesframeworkproving}, framework. 

Our tools can be applied in settings beyond the SK model, including in showing fast mixing of Glauber dynamics for the posterior distribution arising in $\ZZ_2$ synchronization with side information.

\textbf{Organization of the Paper:}
The rest of the paper is organized as follows. In Section~\ref{secsub:mainresults} we state our main results (Theorem~\ref{thm:smallsubmatricesimplymixing} and Corollary~\ref{cor:appliedtoSK}). In Section~\ref{secsub:proofstrategy} we briefly describe our arguments and proof strategy. We include a notation and definitions section (\ref{secsub:notationdefinitions}).
In Section~\ref{sec:highoverlap} we derive the correlation bounds based on a conditional high overlap guarantee for replicas (Lemma~\ref{lemma:highoverlapreplicas}) and verify this condition for large external fields through a spin alignment guarantee (Corollary~\ref{corollary:manyquantifiers}).
In Section~\ref{sec:mainproofs}, we apply the previous results to all pinned submeasures to derive a lower bound on the spectral gap based on the stochastic localization approach and conclude the proof of the main Theorem~\ref{thm:smallsubmatricesimplymixing}. We prove Corollary~\ref{cor:appliedtoSK} in the same Section. 

We include a short discussion on further applications and open questions in Section~\ref{sec:otherapplicationsandquestions}.

\subsection{Main Results}\label{secsub:mainresults}
Our main result is Theorem~\ref{thm:smallsubmatricesimplymixing}, which states that if all principal submatrices (of the interaction matrix) of a certain size have small operator norm, there exists a threshold on the size of the external field above which Glauber dynamics mixes in polynomial time. 
\begin{theorem}\label{thm:smallsubmatricesimplymixing}
    Let $\delta\in(0,1)$ and let $J\in\RR^{n\times n}$ symmetric such that
    \[
    \max_{\substack{\cS\subset [n] \\ |\cS|\leq \delta n}}\left\|J_{\cS,\cS}\right\|\leq \frac15,
    \]
    then, there exists $\theta\geq 0$, which may depend on $\|J\|$ and $\delta$ but not on $n$, 
    such that lazy Glauber Dynamics for the quadratic Gibbs distribution on $\{\pm 1\}^n$ given by
    \[
    \nu(x) \propto \exp\left(\frac12x^\top J x + \theta \one^\top x\right),
    \]
    mixes, in total variation distance, in time polynomial on $n$. Moreover $\theta$ can be taken to be a universal constant times $\|J\|/{\delta}$.
\end{theorem}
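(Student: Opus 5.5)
The plan is to run the stochastic localization / spectral independence framework with pinnings (Chen–Eldan). Concretely, we aim to show that for every pinning $\tau$ of a subset $\Lambda\subset[n]$ of coordinates (to arbitrary values), and every additional quadratic tilt of the type produced by the localization process, the correlation matrix of the pinned measure satisfies $\|\Cov(\nu^\tau)\|_{\op} \le C$ for a constant $C$ not depending on $n$. By the standard ``approximate conservation of variance'' argument, such a uniform bound gives a spectral gap of order $n^{-O(C\|J\|)}$, or at least $1/\mathrm{poly}(n)$, for Glauber dynamics. Combined with the trivial bound $\log(1/\min\nu)=O(n(\|J\|+\theta))$, this yields polynomial mixing time in total variation.

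To bound the correlation matrix, we use a high-overlap argument. Take two independent replicas $x,x'$ from the pinned measure. We write
\[
\Cov_{ij}=\tfrac12\EE[(x_i-x'_i)(x_j-x'_j)],
\]
so that for a unit vector $v$,
\[
v^\top\Cov v=\tfrac12\EE\langle v,x-x'\rangle^2 .
\]
The key claim is that, with overwhelming probability, $x$ and $x'$ differ on a set $\cD$ of size at most $\delta n$ among the unpinned coordinates. On that event, $\langle v,x-x'\rangle^2$ is controlled by the law of the spins indexed by a small set. The small-submatrix hypothesis $\|J_{\cS,\cS}\|\le 1/5$ for $|\cS|\le\delta n$ then lets us compare with a measure whose interaction on $\cS$ has spectral diameter below $1$. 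There we apply Eldan–Koehler–Zeitouni-type covariance bounds, which hold for any external field, including the effective field $\theta\one+J_{\cS,\cS^c}x_{\cS^c}$. The bad event contributes at most $4n\cdot e^{-cn}$, which is negligible.

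The spin alignment step is that most unpinned spins equal $+1$ with overwhelming probability once $\theta\ge C\|J\|/\delta$. The conditional field at site $i$ is $\theta+(Jx)_i$. Since $\|Jx\|_2\le\|J\|\sqrt n$, at most $\delta n/4$ coordinates can satisfy $|(Jx)_i|\ge 2\|J\|/\sqrt{\delta}$. For the remaining sites the effective field is at least $\theta/2$, so each is flipped with probability at most $e^{-\theta}$. A union/Chernoff bound over the configuration, or a Peierls-type counting argument, should show that the number of minus spins exceeds $\delta n/4$ with probability at most $\binom{n}{\delta n/4}e^{-c\theta\delta n}\le e^{-cn}$ once $\theta\gtrsim \|J\|/\delta$. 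Here the $\log(e/\delta)$ entropy factor is absorbed since $\|J\|\ge$ the assumed small-norm scale is not guaranteed; we may need to take $\theta\ge C\max(\|J\|,1)/\delta$.

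Two replicas that each have at most $\delta n/4$ minus spins among the free coordinates differ on at most $\delta n/2$ sites. Pinned coordinates are identical across replicas, so pinning only helps. The stochastic localization tilt adds a linear field and a negative-definite quadratic term, and we will need to handle both inside the same argument.

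The main obstacle is turning ``the replicas differ on a small random set $\cD$'' into an operator-norm bound on $\Cov$. The set $\cD$ is random, and $v$ may align with it. The plan is to condition on $x_{\cS^c}$ for a set $\cS\supset\cD$ chosen via the alignment event. An approach I expect to work is to bound $\EE\langle v,x-x'\rangle^2$ by the variance of $\langle v,x\rangle$ under the conditional law given the coordinates that are $+1$ in both replicas, using the law of total variance. In the conditional measure restricted to $\cS$, the interaction $J_{\cS,\cS}$ has norm at most $1/5$, so its spectral diameter is at most $2/5<1$. The EKZ Poincaré/covariance bound then gives $\|\Cov\|\le 1/(1-2/5)$ uniformly. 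Making this decomposition rigorous is where I expect the difficulty, because $\cS$ depends on the sample.
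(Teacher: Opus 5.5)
Your architecture matches the paper's: pinning localization, two replicas with high overlap, a Peierls-type alignment estimate, and the small-submatrix hypothesis applied on the disagreement set. But the step you flag as the main obstacle is where the key idea lies, and neither of your proposed fixes works as stated. A set $\cS\supseteq\cD$ ``chosen via the alignment event'' is sample-dependent, so conditioning on $x_{\cS^c}$ does not give a Gibbs measure on a fixed small set. Conditioning on the set of coordinates that are $+1$ in both replicas leaves $x$ and $x'$ coupled through the constraint $(x_j,x'_j)\neq(1,1)$ off that set, so $\EE[\langle v,x-x'\rangle^2\mid\cdot\,]$ is not twice a conditional variance of $\langle v,x\rangle$.

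The missing fact is an exact identity (Lemma~\ref{lem:newGibbs}). Let $X,X'$ be i.i.d.\ from $\nu_{J,h}$ and $\cD=\{i:X_i\neq X'_i\}$. Conditionally on $\cD$, the vector $\frac12(X-X')_{\cD}$ has law $\nu_{2J_{\cD,\cD}}$ with \emph{zero} external field. The reason is that in $\nu(X_{\cD}=z,X_{\cD^c}=y)\,\nu(X'_{\cD}=-z,X'_{\cD^c}=y)$ the field on $\cD$ and all couplings $J_{\cD,\cD^c}$ cancel, leaving $e^{z^\top J_{\cD,\cD}z}$ times a function of $y$. Hence $\EE[\langle a,X-X'\rangle^2\mid\cD]=4a_{\cD}^\top\Cov(\nu_{2J_{\cD,\cD}})a_{\cD}$. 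On $\{|\cD|\le\delta n\}$ the hypothesis gives $\|2J_{\cD,\cD}\|\le 2/5$, hence $\|\Cov(\nu_{2J_{\cD,\cD}})\|\le 5$. Note that $2J_{\cD,\cD}$ enters, not $J_{\cS,\cS}$ with a random effective field; this is why the hypothesis asks for $1/5$.

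Second, in the pinning scheme the quantity to bound uniformly is $\|\Cor\|$, not $\|\Cov\|$. Pinning a uniformly random free coordinate removes, in expectation, $\frac{1}{n-k}\sum_i\Cov(\psi,x_i)^2/\Var(x_i)$ of variance. Single-site variances are typically of order $e^{-2\theta}$, and can be as small as $e^{-c\sqrt n}$ at sites where a pinning induces a field of order $\sqrt n$. So $\|\Cov\|\le C$ does not give $\|\Cor\|=O(1)$. The identity above yields the correlation bound directly, since $\EE\|a_{\cD}\|^2=\sum_i a_i^2\PP(i\in\cD)=\frac12a^\top\dCov(X)a$.

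The bad event must likewise be compared with $a^\top\dCov(X)a$. The paper bounds it by $4m\sum_ia_i^2\PP(i\in\cD)\,\PP(|\cD|>\delta m\mid X_i\neq X'_i)$. This requires the alignment estimate after additionally pinning $X_i=1$, $X'_i=-1$ (Corollary~\ref{corollary:manyquantifiers}), which your plan does not mention.

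Finally, drop the Gaussian-localization tilts. The linear fields they generate are random and not deterministically bounded; a field near $-\theta\one$ would destroy the magnetization, so the uniform bound you need cannot be expected for them. Pinnings alone suffice. They induce the field $J_{\cS,\cP}\tau_{\cP}$ of norm at most $\|J\|\sqrt{|\cP|}=O(\sqrt m)$ as long as $m\ge\delta n$ spins are free. Pinned measures with fewer free spins serve as the base case of the localization: there $\|J_{\cS,\cS}\|\le1/5$ gives fast mixing directly by the Eldan--Koehler--Zeitouni condition.
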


A key feature of this result is that the required field strength is independent of $n$ when $\|J\|$ and $\delta$ can be guaranteed to be constant.

We apply this result to the Sherrington-Kirkpatrick model in Corollary~\ref{cor:appliedtoSK}. In this setting, the interaction matrix is a scaled GOE matrix which satisfies the assumptions with high probability. The size of the external field, for which polynomial time mixing can be guaranteed, depends on the inverse temperature $\beta$, but not on $n$, the size of the system.

\begin{corollary}\label{cor:appliedtoSK}
    For any inverse temperature $\beta$, and $n$ large enough, there exists $\theta\geq 0$, which may depend on $\beta$ but not on $n$, such that lazy Glauber Dynamics for the Sherrington-Kirkpatrick Gibbs measure at inverse temperature $\beta$ and with external field $\theta\one$ 
        \[
    \nu(x) \propto \exp\left(\frac{\beta}2x^\top W x + \theta \one^\top x\right),
        \]
    where $W\sim\mathrm{GOE}$ (symmetric, $W_{ij}\sim\mathcal{N}(0,1/n)$ for $i\neq j$ and $W_{ii}\sim\mathcal{N}(0,2/n)$), mixes, in total variation distance, in time polynomial in $n$ with high probability.
\end{corollary}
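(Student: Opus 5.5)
We derive \cref{cor:appliedtoSK} from \cref{thm:smallsubmatricesimplymixing} applied to $J=\beta W$. It suffices to exhibit, for each fixed $\beta$, a constant $\delta=\delta(\beta)\in(0,1)$ such that, with high probability over $W\sim\mathrm{GOE}$, two conditions hold: every principal submatrix $(\beta W)_{\cS,\cS}$ with $|\cS|\le\delta n$ has operator norm at most $\frac15$, and $\|\beta W\|\le 3\beta$. The second condition is the standard bound $\|W\|\le 2+o(1)$ with high probability. Given both, \cref{thm:smallsubmatricesimplymixing} yields a field strength $\theta = C\|\beta W\|/\delta \le 3C\beta/\delta(\beta)$. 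Since the theorem's choice of $\theta$ is monotone (any $\theta$ of that form works, and enlarging it only helps if we take the stated expression with the deterministic upper bound $3\beta$), this $\theta$ depends only on $\beta$, not on $n$. We would check in the proof of the theorem that an upper bound on $\|J\|$ suffices there; this is the only bookkeeping subtlety.

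The main step is the uniform bound on small principal submatrices. Fix $\cS$ with $|\cS|=k\le\delta n$. The matrix $\sqrt{n/k}\,W_{\cS,\cS}$ is a $k\times k$ GOE, so $W_{\cS,\cS}$ has the law of $\sqrt{k/n}\,W'$ for a $k\times k$ GOE matrix $W'$. Gaussian concentration for the $1$-Lipschitz function $\|\cdot\|$ of the Gaussian entries (the entries have standard deviation at most $\sqrt{2/n}$, so the Lipschitz constant with respect to the standard Gaussian vector is $O(1/\sqrt n)$) gives
\[
\PP\brac{\|W_{\cS,\cS}\| \ge 2\sqrt{k/n}+\sqrt{2/n}+t}\le e^{-nt^2/4}.
\]
Here we used $\EE\|W'\|\le 2+O(k^{-1/2})$, or simply the cruder bound $\EE\|W_{\cS,\cS}\|\le C_0\sqrt{k/n}$. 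We then take a union bound over the $\binom{n}{k}\le (en/k)^k\le \exp(n\delta\log(e/\delta))$ sets of size $k$, and over the at most $n$ values of $k$. Set $t=\frac{1}{10\beta}$ and choose $\delta$ small enough that both $C_0\sqrt\delta\le\frac1{20\beta}$ and $\delta\log(e/\delta)\le t^2/8$. Then every such submatrix satisfies $\|\beta W_{\cS,\cS}\|\le\frac15$, except on an event of probability at most $n\exp(-nt^2/8)\to0$.

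Combining the two high-probability events with \cref{thm:smallsubmatricesimplymixing} gives polynomial-time mixing of lazy Glauber dynamics with high probability. The only mildly delicate point is the entropy-versus-concentration tradeoff in the union bound. It works because the concentration exponent is of order $n$ while the entropy is $n\delta\log(1/\delta)$, which can be made smaller by taking $\delta$ small. A finite-$n$ requirement ($n$ large enough) absorbs the $\sqrt{2/n}$ term and the $o(1)$ in $\|W\|$.
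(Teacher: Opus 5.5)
Your proposal is correct and follows essentially the same route as the paper: apply Theorem~\ref{thm:smallsubmatricesimplymixing} to $J=\beta W$, use $\|\beta W\|\le 3\beta$ with high probability, and control all principal submatrices of size at most $\delta n$ by Gaussian concentration plus a union bound, with $\delta$ small enough that the entropy $\delta n\log(e/\delta)$ is beaten by a concentration exponent of order $n/\beta^2$. The differences are cosmetic. You union bound over every $k\le\delta n$, whereas the paper reduces to $|\cS|=\lfloor\delta n\rfloor$ by monotonicity. You concentrate $\|\cdot\|$ directly rather than $\lambda_{\max}$ and $\lambda_{\min}$ separately. You also state explicitly that $\theta$ can be taken from the deterministic bound $3\beta$ in place of $\|J\|$, which the paper leaves implicit.
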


\subsection{Proof Strategy}\label{secsub:proofstrategy}
We show an upper bound on the (cold-started) mixing time by lower bounding the spectral gap of Glauber dynamics. We establish such a lower bound within the stochastic localization framework on pinned measures (Lemma~\ref{lemma:spectralindconditionsformixing}) by controlling all pinned measures.
More precisely, the proof proceeds by splitting pinned measures in two regimes: If at most $\delta n$ spins remain unpinned, the small-submatrix assumption guarantees that the pinned measure is effectively of high temperature. By the stochastic localization approach, it suffices to uniformly bound the correlation matrices of all pinned measures that still have many non-pinned spins.

The difficulty in controlling correlation matrices for such pinned measures is that the interaction matrix of the pinned measure is large and does not satisfy the small spectral radius condition. Instead, we use a reduction on high-overlap replicas, Lemma~\ref{lemma:highoverlapreplicas}: If two independent replicas have high conditional overlap, the spectral norm of the correlation matrix can be controlled by small-subset covariance matrices (corresponding to the small set of spins in which the replicas disagree). 

We show that a large external field forces many spins to align with the external field: 
Lemma~\ref{lemma:manyquantifiers} (more precisely, its one-spin conditioned version, Corollary~\ref{corollary:manyquantifiers}) states that for any desired constant overlap, one can pick $\theta$ large enough as to guarantee that, with high probability, two replicas have at least that level of overlap, resulting in suitably small disagreement sets where the principal submatrix condition can be deployed.

Since under a pinning, the original external field is perturbed, we need to consider the worst-case pinning-induced external field. Proposition~\ref{proposition:allpinningsgiveboundedexternalfields} states that the perturbation of the external field is only of order $O(\sqrt{n})$.
In this case, Lemma~\ref{lemma:manyquantifiers} still guarantees strong alignment with the original external field, in any pinned model with sufficiently many unpinned spins.

A uniform bound on the correlation matrices of all pinned measures up to a pinning size of $(1-\delta)n$ then follows from combining Lemma~\ref{lemma:highoverlapreplicas} and~\ref{proposition:allpinningsgiveboundedexternalfields} with Lemma~\ref{corollary:manyquantifiers}.
An application of Lemma~\ref{lemma:spectralindconditionsformixing}, completes the proof of a polynomial time upper bound on the mixing time for Glauber dynamics. Corollary~\ref{cor:appliedtoSK} follows by verifying the small-submatrix condition for GOE matrices by a union bound.

\subsection{Notation and Definitions}\label{secsub:notationdefinitions}

The measure $\nu$ is defined on the binary hypercube $\{\pm 1\}^n$ for $n \in \NN$. 
However, we will also need to define Gibbs measures on $\{\pm 1\}^m$, $m \leq n$, and for any external field  $h \in \RR^m$ as a framework to include all pinnings: we denote by $\nu_{J,h}(x)$ the quadratic Gibbs measure on the hypercube $\{\pm 1\}^m$
    \begin{align}\label{eq:Gibbs}
        \nu_{J,h}(x) \propto \exp\brac{\frac{1}{2} x^\top J x + h^\top x},
    \end{align}
    for a symmetric interaction matrix $J\in \RR^m$ and an external field vector $h \in \RR^m$.
    When the external field is zero we simply write $\nu_J$.

For a matrix $J \in \RR^{n \times n}$, we use  $J^{(\cT)} =J_{\cT,\cT}$ for $\cT \subseteq [n]$ to denote a principal submatrix, and use $\sdiam(J)=\lambda_{\max}(J)-\lambda_{\min}(J)$ to denote its spectral diameter.

    Given a random variable $X$ on the hypercube where each coordinate has non-zero variance we define
    $$\Cov(X) = \EE(X-\EE(X))(X-\EE(X))^\top$$
    $$\dCov(X) = \ddiag(\Cov(X)),$$
    where $\ddiag(M)$ is the diagonal matrix where the diagonal entries match the ones from $M$. The correlation matrix is given by
    $$\Cor(X) = \dCov(X)^{-\frac12}\Cov(X)\dCov(X)^{-\frac12}.$$
For a measure $\nu$, we denote by $\Cov_\nu$, $\Cor_\nu$ and $\dCov_\nu$ the respective matrices for $X\sim \nu$.

     For two independent copies $X,X' \sim \nu$ on $\{\pm 1\}^m$ define the (unnormalized) overlap to be
\begin{align}\label{eq:def:overlap}
    \rho(X,X') =\sum_{i=1}^m \mathbbm{1}_{X_i = X_i'}.
\end{align}

\section{High Overlap Distributions}\label{sec:highoverlap}

In this section we study distributions with high overlap, meaning that two independent replicas have high overlap with high probability. We start by showing that the correlation matrix of such a distribution can be controlled by covariances on small disagreement set, and then show that the distributions we study in this paper are indeed high overlap distributions.

\subsection{Correlation Bounds based on High Overlap}

We start by characterizing the law of the difference between two independent replicas, in their disagreement set.
We will see that, conditioned on the disagreement set of two replicas, the difference between them is distributed according to a new quadratic Gibbs measure, on a smaller support. 

\begin{lemma}\label{lem:newGibbs}
 Let $X,X'$ be iid draws from the quadratic Gibbs measure $\nu_{J, h}$ (as defined in~\eqref{eq:Gibbs}). Define $Z=X-X'$, the disagreement set $\cD \coloneqq \{i: X_i \neq X_i'\}$, and $Z_\cD$ as the restriction of $Z$ to the coordinates in $\cD$.
 It holds that the random variable $\frac12 Z_\cD \mid \cD$ is distributed according to a Gibbs measure of the form~\eqref{eq:Gibbs} with interaction matrix $2J_{\cD, \cD}$ and zero external field.
\end{lemma}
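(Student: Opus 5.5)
We change variables from the pair $(X,X')$ to a description by the disagreement set, and compute the conditional law directly from the joint density $\nu_{J,h}(x)\nu_{J,h}(x')$.

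\textbf{Step 1 (reparametrization).} For a pair $(x,x')$ with disagreement set $\cD$, set $s = \frac12(x+x')$ and $z=\frac12(x-x')$. On $\cD^c$ we have $s_i = x_i=x_i'$ and $z_i=0$; on $\cD$ we have $s_i=0$ and $z_i = x_i\in\{\pm1\}$. So the map $(x,x')\mapsto(\cD, y, w)$ with $y = x_{\cD^c}\in\{\pm1\}^{\cD^c}$ and $w=\frac12 z_\cD = x_\cD\in\{\pm1\}^{\cD}$ is a bijection onto the set of triples, with $x=s+z$ and $x'=s-z$.

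\textbf{Step 2 (joint weight).} Compute
\[
\tfrac12 x^\top J x + \tfrac12 x'^\top J x' = s^\top J s + z^\top J z,
\]
since the cross terms $\pm s^\top J z$ cancel. Likewise $h^\top x + h^\top x' = 2h^\top s$. Because $s$ is supported on $\cD^c$ and $z$ on $\cD$, we get $s^\top J s = y^\top J_{\cD^c,\cD^c}y$ and $h^\top s = h_{\cD^c}^\top y$. We also get $z^\top J z = w^\top J_{\cD,\cD} w = \frac12 w^\top (2J_{\cD,\cD}) w$. Hence the joint probability of $(\cD,y,w)$ is proportional to
\[
\exp\Big(y^\top J_{\cD^c,\cD^c} y + 2h_{\cD^c}^\top y\Big)\cdot \exp\Big(\tfrac12 w^\top (2J_{\cD,\cD}) w\Big).
\]

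\textbf{Step 3 (conditioning).} For fixed $\cD$ this weight factorizes as a function of $y$ times a function of $w$. Conditioning on $\cD$ and summing out $y$ therefore leaves a conditional law of $w=\frac12 Z_\cD$ proportional to $\exp(\frac12 w^\top(2J_{\cD,\cD})w)$ on $\{\pm1\}^{\cD}$. This is $\nu_{2J_{\cD,\cD}}$ with zero external field, as claimed.

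\textbf{Main obstacle.} The only delicate point is checking that the $y$-part of the weight does not depend on $w$, i.e.\ that the cross term between the agreement and disagreement coordinates vanishes. This is exactly the cancellation of $\pm s^\top J z$ in Step 2. We also need the convention that the diagonal of $J$ enters consistently on both sides. Since $w_i^2=1$, the diagonal contributes only a constant, so it is harmless.
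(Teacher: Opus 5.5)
Your proof is correct and follows the paper's argument: both write the product weight $\nu_{J,h}(x)\nu_{J,h}(x')$ in terms of the common values $y$ on $\cD^c$ and $\pm w$ on $\cD$, note that the $\cD$--$\cD^c$ cross terms and the field on $\cD$ cancel, and sum out $y$; your $s,z$ decomposition just organizes that cancellation a bit more cleanly. One notational slip: since you already defined $z=\frac12(x-x')$, Step 1 should read $w = z_\cD = \frac12 Z_\cD = x_\cD$ rather than $w=\frac12 z_\cD$.
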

\begin{proof}
    Let $X,X' \sim \nu_{J,h}$. For every $i \notin \cD$, $Z_i=0$ by definition.
    Therefore, the measure $\PP_{X,X' \sim \nu}(Z_\cD \mid \cD)$ is supported on $\{\pm 2\}^{\cD}$ and  $\tilde{Z} = \frac12 Z_\cD$ on $\{\pm 1\}^{\cD}$. 
    For every $z \in \{\pm 1\}^{\cD}$ it holds
    
    \begin{align*}
        \PP_{X,X' \sim \nu}(\tilde{Z } = z \mid \cD)
        &= \frac{1}{\PP(\cD)}\sum_{y \in \{\pm 1\}^{\cD^c}} \PP_{X,X' \sim \nu}(X_\cD = z, X'_\cD = -z, X_{\cD^c} = X'_{\cD^c} = y)\\
        &= \frac{1}{\PP(\cD)}\sum_{y \in \{\pm 1\}^{\cD^c}}  \nu(X_\cD = z,  X_{\cD^c} = y) \nu( X'_\cD = -z, X'_{\cD^c} = y)\\
        &\propto \sum_{y \in \{\pm 1\}^{\cD^c}} \exp(z^\top J_{\cD,\cD} z + y^\top J_{\cD^c, \cD^c} y +2 h_{\cD^c}^\top y)\\
        &\propto \exp\left(\frac{1}{2}z^\top 2J_{\cD, \cD} z \right),
    \end{align*}
    where the normalizing constant does not depend on $z$. In the third step we used that the interactions and the external field cancel since  $(y+y)^\top J_{\cD, \cD^c}(z-z) + (z-z)^\top h_{\cD} =0$.
    Therefore, the law of $\frac12 Z_\cD \mid \cD$ is defined by a Gibbs measure of the form~\eqref{eq:Gibbs} with interaction matrix $2J_{\cD, \cD}$ and zero external field.
\end{proof}

We are now ready to show that if a distribution has high overlap, one can bound the norm of the correlation matrix by covariances on small subsets.

In order to bound the correlation matrices such a high overlap guarantee still needs to hold after conditioning on the disagreement of a single spin. This guarantees that the influence of a single event is marginal on the replica overlap.

\begin{lemma}\label{lemma:highoverlapreplicas}
Let $\nu_{J,h}$ be a quadratic Gibbs probability measure over $\{\pm 1\}^m$ and let $\delta \in (0,1)$. Let $X,X'$ be two independent copies drawn from $\nu_{J,h}$. If there exists $\eps_m$ such that, for any $i \in [m]$,
    \begin{align}\label{eq:overlap}
         \PP_{X,X' \sim \nu}\Big(\rho(X,X') \leq (1-\delta)m \Bigm| X_i \neq X_i'\Big) \leq \frac{\eps_m}{m},
    \end{align}
    Then,
    \begin{align*}
        \| \Cor(X) \|_\op \leq \sup_{\cS \subseteq [m]: |\cS| \leq \delta m }  \|\Cov(\nu_{2 J_{\cS,\cS}})\|_\op + \eps_m.
    \end{align*}
    Here $\rho(X,X')$ is the overlap as defined in~\eqref{eq:def:overlap}, $J_{\cS,\cS}$ is the principal submatrix of $J$ on the index set $\cS$, and $\nu_{2 J_{\cS,\cS}}$ is the quadratic Gibbs measure on $\{\pm1\}^{\cS}$ with quadratic disorder $2J_{\cS,\cS}$ and zero external field.
\end{lemma}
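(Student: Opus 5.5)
Since the correlation matrix is symmetric, the plan is to bound $\|\Cor(X)\|_\op$ by a maximal row sum, $\|\Cor(X)\|_\op\le\max_i\sum_j|\Cor(X)_{ij}|$, and to express each row through the replica difference $Z=X-X'$. For $X,X'$ i.i.d.\ we have $\EE[Z_iZ_j]=2\Cov(X)_{ij}$, and $Z_iZ_j\neq 0$ only when $i,j\in\cD$. Also $\Var(X_i)=\tfrac12\EE Z_i^2 = 2\PP(i\in\cD)$. So, writing $p_i=\PP(X_i\neq X_i')$,
\[
\Cov(X)_{ij}=\tfrac12\EE[Z_iZ_j]=2\,\EE\bigl[\tilde Z_i\tilde Z_j\mathbbm{1}_{i,j\in\cD}\bigr],
\]
where $\tilde Z=\tfrac12 Z$. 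Conditioning on the event $\{i\in\cD\}$ and on $\cD$ itself, the row of $\Cor$ becomes an average over $\cD$ drawn from its law given $i\in\cD$. Here $\Cov(X)_{ij}$ equals $2p_i\,\EE[\EE[\tilde Z_i\tilde Z_j\mid\cD]\mathbbm{1}_{j\in\cD}\mid i\in\cD]$. By \cref{lem:newGibbs}, given $\cD$ the vector $\tilde Z_\cD$ is distributed as $\nu_{2J_{\cD,\cD}}$, which is symmetric, so $\EE[\tilde Z_i\tilde Z_j\mid\cD]=\Cov(\nu_{2J_{\cD,\cD}})_{ij}$.

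The normalization $\dCov^{-1/2}$ is not symmetric in $i,j$ unless handled carefully. I would therefore work with a weighted row-sum (Schur test) rather than a plain one. For a symmetric matrix $A$ and positive weights $w$, $\|A\|\le\max_i\sum_j|A_{ij}|w_j/w_i$. Take $A=\Cor$ and $w_i=\sqrt{\Var(X_i)}=\sqrt{2p_i}$. Then
\[
\sum_j|\Cor_{ij}|\,\frac{w_j}{w_i}=\sum_j\frac{|\Cov_{ij}|}{2p_i}\le \EE\Bigl[\sum_{j\in\cD}\bigl|\Cov(\nu_{2J_{\cD,\cD}})_{ij}\bigr|\Bigm| i\in\cD\Bigr].
\]
The remaining step is to bound the inner sum, and this is where a row-sum of absolute values is problematic. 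The goal is an operator norm of $\Cov(\nu_{2J_\cS})$, and $\ell_1$ row sums of that covariance are not controlled by its operator norm.

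This is the main obstacle. A better route is to test against unit vectors directly instead of using row sums. For a unit vector $u$, write
\[
u^\top\Cor u=\sum_{i,j}\frac{u_iu_j}{w_iw_j}\Cov_{ij}=\tfrac12\,\EE\bigl[\langle v,Z\rangle^2\bigr],\qquad v_i=\frac{u_i}{w_i}.
\]
Since $\Cov(\tilde Z_\cD\mid\cD)$ equals $\Cov(\nu_{2J_{\cD,\cD}})$, this gives
\[
u^\top\Cor u=2\,\EE\bigl[v_\cD^\top\Cov(\nu_{2J_{\cD,\cD}})v_\cD\bigr]\le 2\,\EE\bigl[\|\Cov(\nu_{2J_{\cD,\cD}})\|\,\|v_\cD\|^2\bigr].
\]
Split according to whether $|\cD|\le\delta m$, i.e.\ whether $\rho>(1-\delta)m$. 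On the good event the norm is at most the supremum in the statement. Then $2\EE\|v_\cD\|^2=2\sum_iu_i^2p_i/w_i^2=\sum_iu_i^2=1$, which yields the first term.

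On the bad event, use the trivial bound $\|\Cov(\nu_{2J_\cD})\|\le|\cD|\le m$, coming from a trace bound on a covariance of $\pm1$ vectors. The bad contribution is then at most
\[
2m\sum_iu_i^2\,\frac{\PP(i\in\cD,\ \rho\le(1-\delta)m)}{2p_i}=m\sum_iu_i^2\,\PP\bigl(\rho\le(1-\delta)m\bigm|X_i\ne X_i'\bigr)\le\eps_m .
\]
This uses \eqref{eq:overlap}. Taking the supremum over $u$ gives the claim. (The $u^\top\Cor u$ bound controls $\lambda_{\max}$; since $\Cor$ is PSD, this is the operator norm.)
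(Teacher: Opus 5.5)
Your final argument (after you set aside the Schur-test attempt, which is not needed) is correct and is essentially the paper's proof: test $\Cor$ against a unit vector via $a=\dCov^{-1/2}u$, write the quadratic form as $\frac12\EE[(a^\top(X-X'))^2]$, apply \cref{lem:newGibbs} on the event $|\cD|\le\delta m$ and the trivial trace bound on its complement, and renormalize using $\PP(X_i\neq X_i')=\frac12\Var(X_i)$. One small slip is harmless: $|\cD|\le\delta m$ is equivalent to $\rho\ge(1-\delta)m$, not $\rho>(1-\delta)m$, but either choice of boundary works because the hypothesis covers the whole event $\rho\le(1-\delta)m$.
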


\begin{proof}
    For any vector $a \in \RR^m$ we have
    \begin{align*}
        a^\top \Cov(X) a  &= \Var \brac{a^\top X}= \frac12 \EE \bracc{\brac{\sum_{i=1}^m a_i (X_i-X_i')}^2}\\
        &= \frac12 \EE \bracc{ \EE \bracc{\brac{\sum_{i=1}^m a_i (X_i-X_i')}^2 \Biggm| \cD}},\\
    \end{align*}
    where $\cD \coloneqq \{i: X_i \neq X_i'\}$ denotes the disagreement set; the equality in first row holds since for any iid $Y$ and $Y'$, $\EE\bracc{(Y-Y')^2} = 2 \EE\bracc{Y^2} - 2 \EE[Y]\EE[Y'] = 2 \Var(Y)$; the last equality holds by the tower property of expectation.

Let $Z=\frac{X-X'}{2}$. A key observation is that the law of $Z \mid \cD$ is given by: $Z_{\cD^c}=0$ and $Z_{\cD}$ is a symmetric Gibbs measure of form~\eqref{eq:Gibbs}, supported on $\{\pm 1\}^{\cD}$, with interaction matrix given by $2 J_{\cD, \cD}$ and zero external field (see Lemma~\ref{lem:newGibbs} for a proof of this fact; this strategy dates back, at least, to~\cite{percus1975correlation}).

If $|\cD| \leq \delta m$ we have
\begin{align*}
    \EE \bracc{\brac{\sum_{i=1}^m a_i (X_i-X_i')}^2 \Biggm| \cD} &= \Var\brac{2 a_{\cD}^\top Z_{\cD} \mid \cD} \\
    &= 4 a_{\cD}^\top \Cov(\nu_{2 J_{\cD,\cD}})a_{\cD} \leq   4 \|a_{\cD}\|^2 \|\Cov(\nu_{2 J_{\cD,\cD}})\|,
\end{align*}
where $a_{\cD}$ is the vector $a$ restricted to the coordinates in $\cD$.

Therefore, taking expectation over $\cD$, we have
\begin{align*}
    &\EE \bracc{ \EE \bracc{\brac{\sum_{i=1}^m a_i (X_i-X_i')}^2 \Biggm| \cD} \mathbbm{1}_{\{|\cD| \leq \delta m\}}}
    \leq
    \EE \bracc{ 4 \|a_{\cD}\|^2 \|\Cov(\nu_{2 J_{\cD,\cD}})\| \mathbbm{1}_{\{|\cD| \leq \delta m\}}}  \\
        &\leq
    4\EE \bracc{  \|a_{\cD}\|^2 } \sup_{\cS \subseteq [m]: |\cS| \leq \delta m} \|\Cov(\nu_{2 J_{\cS,\cS}})\|  
    \leq 4 \Bigg(\sum_{i=1}^m a_i^2 \, \PP\brac{i \in \cD}\Bigg)\sup_{\cS \subseteq [m]: |\cS| \leq \delta m} \|\Cov(\nu_{2 J_{\cS,\cS}})\| \\
    &= 2 \left(a^\top \dCov(X) a\right)   \sup_{\cS \subseteq [m]: |\cS| \leq \delta m} \|\Cov(\nu_{2 J_{\cS,\cS}})\|,
\end{align*}
the last equation holds since $\PP(X_i \neq X_i') =\frac12\Var(X_i)$.

It remains to consider $|\cD| > \delta m$, i.e. $\rho(X,X') < (1-\delta) m$ which will hold with low probability. In this case we use the trivial bound that the covariance matrix of a distribution on $\{\pm2,0\}^m$ has spectral norm at most $4m$: 
\begin{align*}
    \EE \bracc{\brac{\sum_{i=1}^m a_i (X_i-X_i')}^2 \Biggm| \cD} \leq \|a_{\cD}\|^2 4 m.
\end{align*}
Moreover, 
\begin{align*}
         &\EE \bracc{ 4m\,\mathbbm{1}_{\{|\cD| > \delta m \}} \|a_{\cD}\|^2 }
         = \sum_{i=1}^m\EE \bracc{ 4m\,\mathbbm{1}_{\{|\cD| > \delta m \}} \mathbbm{1}_{i\in\cD} \, a_i^2 }\\
         &\leq 4m \sum_{i=1}^m a_i^2 \PP\brac{i \in \cD} \PP\brac{|\cD| > \delta m \mid i \in \cD}
         \leq 4  \eps_m \sum_{i=1}^m a_i^2 \PP\brac{i \in \cD} = 2  \eps_m a^\top \dCov(X) a .
\end{align*}

We are now ready to finish the proof. Let $b\in\RR^m$ such that $\|b\|=1$ and $a = \dCov(X)^{-\frac12} b$ which is well-defined for quadratic Gibbs measures. Note that $\|b\|^2=a^\top \dCov(X) a$. We have    \begin{align}\label{eq:finalresult}
        b^\top \Cor(X) b &= a^\top \Cov(X)a \nonumber\\
         &\leq \frac12 \brac{2 \|b\|_2^2 \sup_{\cS \subseteq [m]: |\cS| \leq \delta m} \|\Cov(\nu_{2 J_{\cS,\cS}})\| +2\eps_m\|b\|_2^2 }\nonumber\\
         &=   \sup_{\cS \subseteq [m]: |\cS| \leq \delta m} \|\Cov(\nu_{2 J_{\cS,\cS}})\| + \eps_m
    \end{align}
\end{proof}

\subsection{Spin Alignment in Quadratic Gibbs Measures}
In this section we prove that a large constant external field $h=\theta \one$ forces alignment of many spins with it (even after adding a smaller external field $v$, potentially pointing in a different direction). 
It is important to allow the adding of an arbitrary external field $v$ as one needs to control pinned submeasures of $\nu$ for any worst-case pinning (and a pinning induces an external field on the un-pinned spins).

\begin{lemma}\label{lemma:manyquantifiers}
For all $C_1,C_2\geq 0$ and $\delta \in (0,1)$, there exists $\theta,c_3,m_0\geq 0$ such that the following holds: for all $m \geq m_0$, $J \in \RR^{m \times m}$ symmetric, $v \in \RR^m$, if $\|J\|\leq C_1$ and $\|v\|\leq C_2\sqrt{m}$, the random variable $X$ with quadratic Gibbs distribution on $\{\pm 1\}^m$ given by
    \begin{equation}\label{eq:gibbsinlemmamanyquantifiers}
    \nu_{J,\theta \one_m +v}(x) \propto \exp\left(\frac12x^\top J x + \theta \one^\top x + v^\top x  \right),
    \end{equation}
    has, with probability at least $ 1-\exp(-c_3m)$, more than $(1-\frac{\delta}2)m$ positive spins.
\end{lemma}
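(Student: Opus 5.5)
The plan is an energy-versus-entropy comparison. I will show that every configuration with at least $\frac{\delta}{2}m$ negative spins has Gibbs weight exponentially smaller than the all-plus configuration $\one_m$, and that this gap beats the number of such configurations. Write $H(x)=\frac12x^\top Jx+\theta\one^\top x+v^\top x$. Let $B$ be the set of $x\in\{\pm1\}^m$ with at most $(1-\frac\delta2)m$ positive spins, i.e. with $k\ge \frac\delta2 m$ negative spins. Then
\[
\PP(X\in B)\le \sum_{x\in B}\frac{e^{H(x)}}{e^{H(\one)}}\le 2^m\max_{x\in B}e^{H(x)-H(\one)},
\]
so it suffices to show $H(\one)-H(x)\ge (\log 2+c_3)m$ for all $x\in B$, once $\theta$ is large.

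For $x\in B$ with negative set $N$, $|N|=k$, I bound the three terms separately.
\begin{itemize}[leftmargin=*]
\item The field term is exact: $\theta\one^\top(\one-x)=2\theta k\ge \theta\delta m$.
\item The quadratic term is bounded crudely by the operator norm: $|x^\top Jx|\le C_1 m$ and $|\one^\top J\one|\le C_1m$, so the quadratic contribution is at least $-C_1m$.
\item The perturbation term uses Cauchy--Schwarz: $v^\top(\one-x)=2v_N^\top\one_N\ge -2\|v\|\sqrt{k}\ge -2C_2m$.
\end{itemize}
Combining these gives
\[
H(\one)-H(x)\ge \theta\delta m-C_1m-2C_2m.
\]
Choose $\theta=(C_1+2C_2+\log 2+1)/\delta$. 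Then $\PP(X\in B)\le e^{-m}$, which gives the claim with $c_3=1$ and $m_0=1$.

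No step here is a real obstacle, since all bounds are linear in $m$ and independent of the structure of $J$. The one point that needs care is that the crude bound $\|v\|\sqrt{k}\le C_2 m$ is what keeps $\|v\|=O(\sqrt m)$ harmless. It is also why $\theta$ must scale like $1/\delta$ times $(C_1+C_2)$, which is consistent with the $\|J\|/\delta$ dependence in \cref{thm:smallsubmatricesimplymixing}.
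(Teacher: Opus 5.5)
Your proof is correct, and it takes a simpler route than the paper.

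\textbf{Your route.} You compare every bad configuration to the single reference $\one$, using $Z\ge e^{H(\one)}$. You then bound the energy gap with global estimates and pay a crude $2^m$ entropy factor.

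\textbf{The paper's route.} The paper runs a class-by-class Peierls argument. It first sets aside $\cS=\{j:\theta+v_j\le\theta/2\}$, which has size at most $\delta m/4$ once $\theta>4C_2/\sqrt\delta$. For each $\cM\subseteq\cS^c$ with $|\cM|\ge\delta m/4$, it uses the injective map that flips only the negative spins in $\cM$. It gets the ratio bound $e^{-\theta|\cM|/2}$ from $|\one_\cM^\top J\one_\cM-\one_\cM^\top Jz|\le(|\cM|+\sqrt{|\cM|m})\|J\|$, and then sums using $\binom mk\le(em/k)^k$.

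\textbf{What each buys.} Your version is shorter and gives explicit constants valid for every $m\ge1$ ($c_3=1$, $m_0=1$, no large-$m$ threshold). The paper's version has a much better dependence on $\delta$: it needs $\theta\gtrsim(C_1+C_2)/\sqrt\delta+\log(1/\delta)$, while you need $\theta\gtrsim(C_1+C_2+1)/\delta$.

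Your extra loss comes from three places:
\begin{itemize}
\item you replace $\sqrt k$ by $\sqrt m$ in the $v$ term;
\item you bound $x^\top Jx$ and $\one^\top J\one$ separately, instead of their difference $4\one_N^\top J\one-4\one_N^\top J\one_N=O\bigl(C_1(\sqrt{km}+k)\bigr)$;
\item you count all $2^m$ configurations, instead of $\binom mk$ against a gap proportional to $k$.
\end{itemize}
Repairing these would recover the paper's dependence even with your single reference configuration.

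For the lemma, which is purely existential, none of this matters. Your closing remark is inaccurate, though: the $1/\delta$ scaling is not forced. Moreover, the theorem applies the lemma with $C_2=C_1\sqrt{(1-\delta)/\delta}$. Your version would then only give $\theta\asymp\|J\|\delta^{-3/2}+\delta^{-1}$, not the $\|J\|/\delta$ scaling asserted in \cref{thm:smallsubmatricesimplymixing}.
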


\begin{proof}
    Throughout the argument, we will take $\theta$ satisfying:  \begin{align}\label{eq:bound_theta}
        \theta > 4 \max \left\{\frac{ C_2}{\sqrt{\delta}}, \brac{1+ \frac{2}{\sqrt{\delta}}}C_1,\log\brac{\frac{5e}{\delta}}\right\}.
    \end{align}
    
    The key observation is that even though $v$ might have negative entries, for $\theta$ large enough, the external field still has many large positive entries. 
    As we will see, this will force the Gibbs measure to have higher mass on vectors with a large portion of positive spins. 
    
    Let $h=\theta\one+v$ denote the external field and $\cS \coloneqq \{j \in [m]: h_j \leq \frac{\theta}{2}\}$ the set of indices with small (or negative) external field. Triangular inequality gives $\|v\|^2\geq |\cS|\left(\frac{\theta}{2}\right)^2$, so by taking $\theta > 4\frac{C_2}{\sqrt{\delta}}$, we have $|\cS|\leq \frac{\delta m}{4}$.
    
    For each $x \in \{\pm 1\}^m$ define the set of negative spins as $\cN(x) \coloneqq \{j \in [m]: x_j = -1\}.$
   
Since $|\cS|\leq \frac{\delta m}{4}$ we have
\begin{equation}\label{eq:unionboundmanyquantifiers}
\PP\left(\abs{\cN(X)} \geq \frac{\delta m}{2}\right) \leq \PP\left(\abs{\cN(X) \cap \cS^c}\geq \frac{\delta m}4 \right) = \sum_{\substack{\cM\subseteq\cS^c \\ |\cM|\geq\frac{\delta m}4}} \PP\left(\cN(X) \cap \cS^c = \cM \right).
\end{equation}

    Now, it suffices to upper bound $\PP(\cX_\cM)$ for each $\abs{\cM} \geq \frac{\delta m}{4}$, where \[\cX_\cM \coloneqq \{x \in \{\pm 1\}^m:  \cN(x) \cap \cS^c = \cM\}.\]

    In order to prove low probability of each set $\cX_\cM$, we construct, for each $\cM$, an injective map $z_\cM:\cX_\cM\to\{\pm1\}^m$ given by

    \begin{align*}
        z_{\cM}(x)_{j} \coloneqq \begin{cases}
            -x_j, &\quad \text{ if } j \in \cM \\
            x_j, &\quad \text{ if } j \notin \cM, 
        \end{cases}
    \end{align*}
    and upper bound $\frac{\nu_{J,h}(x)}{\nu_{J,h}(z_{\cM}(x))}$ for each $x\in\cX_\cM$.

Note that, since $x_j=-1$ for $j\in\cM$,  $z_{\cM}(x)-x = 2\mathbbm{1}_{\cM}$ and so $$x^\top J x = \left( z_{\cM}(x)- 2\mathbbm{1}_{\cM} \right)^\top J\left( z_{\cM}(x)- 2\mathbbm{1}_{\cM} \right) = z_{\cM}(x)^\top Jz_{\cM}(x) + 4 \mathbbm{1}_{\cM}^\top J\mathbbm{1}_{\cM} - 4\mathbbm{1}_{\cM}^\top Jz_{\cM}(x)$$ Thus, for $x\in\cX_\cM$, we have
    \begin{align*}
       \frac{\nu_{J,h}(x)}{\nu_{J,h}(z_{\cM}(x))} &= \exp\left(\frac12 x^\top Jx + h^\top x - \frac12 z_{\cM}(x)^\top Jz_{\cM}(x) - h^\top z_{\cM}(x)\right) \\ 
       & = \exp\left(2\big( \mathbbm{1}_{\cM}^\top J\mathbbm{1}_{\cM} - \mathbbm{1}_{\cM}^\top Jz_{\cM}(x) - h^\top \mathbbm{1}_{\cM}\big)\right)
       .
    \end{align*}
    
    We have $h^\top\mathbbm{1}_{\cM}\geq  \theta \abs{\cM}/2$, therefore, as long as $\theta$ is sufficiently large, the negative contribution $-h^\top\mathbbm{1}_{\cM}$ dominates:
    \begin{align*}
        \Big\lvert \mathbbm{1}_{\cM}^\top J\mathbbm{1}_{\cM} - \mathbbm{1}_{\cM}^\top Jz_{\cM}(x) \Big\rvert &\leq \left(|\cM| + \sqrt{|\cM|m} \right) \|J\| 
        \leq  \abs{\cM} \brac{1+ \brac{\frac{m}{\abs{\cM}}}^\frac{1}{2}} \|J\| \\ & \leq \abs{\cM} \brac{1+ \sqrt{\frac{4}{\delta}}} \|J\| \leq \frac{\theta \abs{\cM}}{4}.
    \end{align*}
    The last inequality follows from the assumption $\theta \geq 4 \brac{1+ \sqrt{\frac{4}{\delta}}}\|J\|$.
    Since $x \mapsto z_\cM(x)$ defines an injective map on the set $\cX_\cM$, 
    \begin{align*}
        \PP\brac{\cN(X) \cap \cS^c  = \cM} = \frac{\sum_{x \in \cX_{\cM}} \nu_{J,h}(x)}{\sum_{x \in \{\pm 1\}^m} \nu_{J,h}(x)} \leq \frac{\sum_{x \in \cX_{\cM}} \nu_{J,h}(x)}{\sum_{x \in \cX_\cM} \nu_{J,h}(z(x))} \leq e^{-\frac{\theta \abs{\cM}}{2}}.
    \end{align*}
    Plugging our upper bound in \eqref{eq:unionboundmanyquantifiers} gives
    \begin{align*}
        \PP\brac{\abs{\cN(X)} \geq \frac{\delta m}{2} } 
        &\leq \sum_{k \geq \lfloor\frac{\delta m}{4}\rfloor} \binom{m}{k} e^{-\frac{\theta k}{2}} \leq \sum_{k \geq \lfloor\frac{\delta m}{4}\rfloor} e^{ k \log(\frac{em}{k})-\frac{\theta k}{2}} \leq \sum_{k \geq \lfloor\frac{\delta m}{4}\rfloor} e^{ k \brac{\log(\frac{5e}{\delta})-\frac{\theta }{2}}}
    \end{align*}
where we used $\binom{m}{k} \leq \exp\brac{k\log(\frac{em}{k})}$ and we assumed $m$ sufficiently large such that $\lfloor\frac{\delta m}{4}\rfloor \geq \frac{\delta m}{5}$. Therefore, if $\theta >4\log\brac{\frac{5e}{\delta}}$, it holds that $\PP\brac{\abs{\cN(X)} \geq \frac{\delta m}{2} } \leq m e^{-\frac{\theta \delta m}{20}}$ which is smaller than or equal to $e^{-c_3 m}$ for sufficiently large $m$ and appropriately picked $c_3$.
\end{proof}

In order to use Lemma~\ref{lemma:highoverlapreplicas} we need a bound after conditioning on the replica disagreement at a single coordinate. As this reduces to a pinning of one coordinate, a new external field is added. Since this external field is small, adapting the proof of~\ref{lemma:manyquantifiers} to this setting is straightforward. 

\begin{corollary}\label{corollary:manyquantifiers}
Given the assumptions of Lemma~\ref{lemma:manyquantifiers}, there exists $\theta, c_3, m_0 \geq 0$ such that:

For every $i \in [m]$ and $\sigma_i \in \{\pm 1\}$, for every $m \geq m_0$, the random variable $X$ with quadratic Gibbs distribution (as in~\eqref{eq:gibbsinlemmamanyquantifiers}) on $\{\pm 1\}^m$ given by $\nu_{J,\theta \one_m +v}(x)$, satisfies that $X$, conditioned on $X_i = \sigma_i$, 
has, with probability at least $ 1-\exp(-c_3m)$, more than $(1-\frac{\delta}2)m$ positive spins.
\end{corollary}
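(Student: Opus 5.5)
Conditioning on $X_i=\sigma_i$ is itself a pinning, so the plan is to rewrite the conditioned law as a quadratic Gibbs measure on the remaining $m-1$ spins and apply Lemma~\ref{lemma:manyquantifiers} to it with slightly enlarged constants. Write $\hat{J}=J_{[m]\setminus\{i\},[m]\setminus\{i\}}$ and $y=x_{[m]\setminus\{i\}}$. Expanding $\frac12 x^\top Jx+(\theta\one+v)^\top x$ with $x_i=\sigma_i$ fixed, the conditional law of $Y=X_{[m]\setminus\{i\}}$ is
\[
\nu_{\hat J,\theta\one_{m-1}+\hat v}, \qquad \hat v = v_{[m]\setminus\{i\}} + \sigma_i J_{[m]\setminus\{i\},i}.
\]
Constant terms drop into the normalization; the diagonal entry $J_{ii}$ only contributes a constant because $x_i^2=1$.

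Next we check the hypotheses. We have $\|\hat J\|\le\|J\|\le C_1$, since a principal submatrix has no larger norm. For the field, $\|J_{\cdot,i}\|\le\|J\|\le C_1$, so
\[
\|\hat v\|\le C_2\sqrt m + C_1 \le (C_2+1)\sqrt{m-1}
\]
for $m$ large, say $m\ge m_0$ with $m_0$ depending only on $C_1,C_2$. We then invoke Lemma~\ref{lemma:manyquantifiers} on $\{\pm1\}^{m-1}$ with constants $C_1$, $C_2'=C_2+1$ and $\delta'=\delta/2$. This produces $\theta,c_3',m_0'$ such that, with probability at least $1-e^{-c_3'(m-1)}$, $Y$ has more than $(1-\delta/4)(m-1)$ positive spins. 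These constants are uniform in $i$ and $\sigma_i$, because neither enters the hypotheses.

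Finally we transfer the count back to $X$. The number of positive spins of $X$ is at least that of $Y$, so $X$ has more than $(1-\delta/4)(m-1)\ge(1-\delta/2)m$ positive spins once $m\ge 4/\delta$. Choosing $c_3<c_3'$ slightly smaller absorbs the shift from $m-1$ to $m$, and we enlarge $m_0$ to cover all the finitely many largeness conditions used.

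The only delicate point is the bookkeeping: the perturbation from pinning one spin is $O(1)$, not $O(\sqrt m)$, so it fits easily within a slightly enlarged $C_2$, and $\theta$ must be chosen uniformly in $i$ and $\sigma_i$. No genuine obstacle is expected.
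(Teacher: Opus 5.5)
Your proposal is correct and follows the paper's own argument. You pin $x_i=\sigma_i$ and view the conditional law as a Gibbs measure on $m-1$ spins with field $\theta\one+v_{-i}+\sigma_i J_{-i,i}$, absorb the $O(1)$ perturbation into an enlarged $C_2$, and apply Lemma~\ref{lemma:manyquantifiers} with a smaller $\delta'$. Your explicit choices $C_2'=C_2+1$, $\delta'=\delta/2$ and $m\geq 4/\delta$ are in fact slightly cleaner bookkeeping than the paper's $C_2'=2C_2+C_1$ with an implicitly defined $\delta'$.
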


\begin{proof}
    The conditional measure $\nu_{J,\theta \one_m +v}(x \mid x_i =\sigma_i)$ is an $(m-1)$-dimensional Gibbs measure on the remaining coordinates $x_{-i}$, with interaction matrix $J_{[m]\setminus\{i\},[m]\setminus\{i\}}$ and external field $h_j = \theta +v_j +J_{ij}\sigma_i$.

    As $\|J_{[m]\setminus\{i\},[m]\setminus\{i\}}\| \leq \|J\| \leq C_1$ and 
    \[\|v_{-i} +J_{i,-i}\sigma_i\| \leq \|v\| + \|J\|\leq \brac{2 C_2+C_1}\sqrt{m-1}.\]
    
    We can use Lemma~\ref{lemma:manyquantifiers} with the same $C_1$, with $C_2'=2C_2+C_1$, $c_3' =c_3/2$ and $\delta'$ such that $\left(1-\frac{\delta'}2\right)(m-1)\geq \left(1-\frac{\delta}2\right)m$. 
    Note that $\theta$ can simply be taken to be three times the $\theta$ in Lemma~\ref{lemma:manyquantifiers}.
\end{proof}

\section{Proof of the Main Results}\label{sec:mainproofs}
\subsection{Stochastic Localization with Pinnings}
The stochastic localization framework of~\cite{chen2022localizationschemesframeworkproving} allows us to obtain lower bound on the spectral gap of Glauber Dynamics from upper bounds on the correlation matrices of all pinned measures up to a certain pinning size. We formalize these guarantees below. The spectral independence framework from~\cite{anari2021spectral} is almost identical, with replacing correlation matrices by pairwise influence matrices and the same kind of worst-case analysis for pinned measures is required.

\begin{lemma}\label{lemma:spectralindconditionsformixing}
    Given a probability distribution $\nu$ on $\{\pm1\}^{n}$ and $0\leq k\leq n$ we say a measure $\nu'$ on $\{\pm1\}^{n-k}$ is a $k-$conditional submeasure of $\nu$ if it arises from $\nu$ after conditioning $k$ coordinates taking a set of values. We call the set of such measures $\mathcal{M}_k(\nu)$.

    Let $\kappa_k$ be defined as
    \[
\kappa_k \coloneqq \sup_{\nu'\in \mathcal{M}_k(\nu)} \left\| \Cor(\nu')\right\|.
    \]

Let $C_4, C_5
\geq 0$ be universal constants (not depending on $n$) with $C_5$ being an integer. For $\nu$ a distribution on the hypercube with $\min_{x \in \{\pm1 \}^n} \nu(x) \geq e^{-C_4 n}$, if there exists an integer $T_6$ satisfying $T_6>2\lceil C_5 \rceil$, for which:
\begin{itemize}
\item $\kappa_k \leq C_5 $ for all $k\leq n-T_6$
\item For $k=n-T_6+1$, Glauber mixes uniformly in polynomial time on any $\nu'\in\cM_k(\nu)$, 
\end{itemize}
then lazy Glauber dynamics mixes in polynomial time for $\nu$.
\end{lemma}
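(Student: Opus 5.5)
This lemma is essentially a repackaging of the stochastic-localization-with-pinnings machinery of~\cite{chen2022localizationschemesframeworkproving}, so the plan is to instantiate their framework rather than build anything new.

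\textbf{Step 1: uniform block factorization.} Use the localization scheme that pins a uniformly random coordinate at each step, revealing its value according to the current conditional measure. After $k$ steps, the law of the localized measure is exactly a mixture of elements of $\cM_k(\nu)$. The key quantitative fact is approximate conservation of variance. If every measure in $\cM_k(\nu)$ has correlation matrix of norm at most $\kappa_k$, then for any test function $f$ one step of pinning loses at most a factor $\bigl(1-\frac{\kappa_k}{n-k}\bigr)$ of the variance in expectation. To see this, write the expected one-step variance decrease as a quadratic form in the correlation matrix of the current measure. This is the standard ``spectral independence $\Rightarrow$ entropy/variance contraction'' computation, and it uses the correlation matrix precisely because the pinned coordinate is revealed with its marginal variance.

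\textbf{Step 2: iterate the factorization.} Iterating for $k=0,\dots,n-T_6$ gives a lower bound on the fraction of variance retained:
\[
\prod_{k=0}^{n-T_6}\Bigl(1-\frac{C_5}{n-k}\Bigr)\gtrsim \Bigl(\frac{T_6}{n}\Bigr)^{C_5}.
\]
This requires $n-k > C_5$ throughout, which is guaranteed by the hypothesis $T_6>2\lceil C_5\rceil$. The product is polynomially small in $n$. By the localization-scheme theorem, the resulting $(n-T_6)$-block factorization of variance shows that the spectral gap of Glauber on $\nu$ is at least the retained fraction times the worst spectral gap of Glauber on the remaining $T_6$-coordinate pinned measures, up to a $T_6/n$ factor from the block-to-single-site comparison. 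The second hypothesis supplies a polynomial lower bound on those spectral gaps, since uniform polynomial mixing implies a polynomial spectral gap via the relaxation-time/mixing-time relation. I expect the main obstacle to be the bookkeeping in this step. The hypothesis is stated at level $k=n-T_6+1$ while the correlation bound runs up to $n-T_6$, so the indices must be checked to align with the block-factorization statement.

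\textbf{Step 3: from spectral gap to mixing time.} Convert the polynomial spectral gap $\gamma$ into total variation mixing by the standard bound
\[
t_{\mathrm{mix}}(\varepsilon)\le \gamma^{-1}\Bigl(\tfrac12\log\tfrac{1}{\min_x\nu(x)}+\log\tfrac1\varepsilon\Bigr).
\]
Laziness ensures the chain is aperiodic with a nonnegative spectrum. The assumption $\min_x\nu(x)\ge e^{-C_4n}$ makes the logarithmic term $O(n)$, so the mixing time is polynomial in $n$ from any initialization.
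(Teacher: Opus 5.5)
Your proposal is correct and follows essentially the same route as the paper: Chen--Eldan localization with pinnings, the one-step variance bound $1-\kappa_k/(n-k)$ iterated over $k=0,\dots,n-T_6$, the terminal spectral gap from the second hypothesis, and the gap-to-mixing conversion using $\min_x\nu(x)\ge e^{-C_4n}$. Your indices do align with the terminal level $n-T_6+1$, which leaves $T_6-1$ free coordinates rather than $T_6$. Your block-factorization phrasing is equivalent to the paper's Dirichlet-form supermartingale argument, and you explicitly track the $(T_6-1)/n$ single-site factor that the paper silently absorbs into $n^{-C_6}$.
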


\begin{proof}
We show a polynomial lower bound on the spectral gap.
The proof follows from Section 3 and 4 in~\cite{chen2022localizationschemesframeworkproving}. We summarize it here for completeness.
Denote the transition kernel of Glauber dynamics with respect to a distribution $\nu$ on $\{\pm 1\}^n$ by $P_\nu$ and the corresponding Dirichlet form for functions $\psi: \{\pm 1\}^n \to \RR$ by $\cE_\nu(\psi,\psi)$.
    By Proposition 4.3 in~\cite{chen2022localizationschemesframeworkproving}, the Dirichlet form of Glauber Dynamics on random measures $\nu_t$, where $\nu_t$ is defined through a stochastic localization scheme initiated at $\nu$, is a supermartingale. Therefore, the spectral gap of Glauber dynamics with respect to the measure $\nu$ can be lower bounded by
    \begin{align*}
        1-\lambda_2(P_\nu) &= \min_{\substack{\psi: \{\pm 1\}^n \to \RR,\\ \Var_\nu(\psi) \neq 0}}\frac{\cE_{\nu}(\psi,\psi)}{\Var_\nu(\psi)}\geq \inf_{\substack{\psi: \{\pm 1\}^n \to \RR,\\ \Var_\nu(\psi) \neq 0}} \EE\bracc{\frac{\cE_{\nu_t}(\psi,\psi)}{\Var_{\nu}(\psi)}} \\
        &= \inf_{\substack{\psi: \{\pm 1\}^n \to \RR,\\ \Var_\nu(\psi) \neq 0}} \EE\bracc{\frac{\cE_{\nu_t}(\psi,\psi)}{\Var_{\nu_t}(\psi)}\frac{\Var_{\nu_t}(\psi)}{\Var_{\nu}(\psi)}} 
        \geq \inf_{\substack{\psi: \{\pm 1\}^n \to \RR,\\ \Var_\nu(\psi) \neq 0}} \EE\bracc{\brac{1-\lambda_2(P_{\nu_t})}\frac{\Var_{\nu_t}(\psi)}{\Var_{\nu}(\psi)}}.
    \end{align*}
    Therefore, if $\nu_t$ is defined by a stochastic localization process such that for all possible realisations it holds that $1-\lambda_2(P_{\nu_t}) \geq \eps>0$, the spectral gap of $P_\nu$ is bounded by $\eps \frac{\EE\bracc{\Var_{\nu_t}(\psi)}}{\Var_{\nu}(\psi)}$ (cf. Theorem 4.1 in~\cite{chen2022localizationschemesframeworkproving}). Here, $\eps$ may depend on $n$.
    Choose $\nu_t$ to be the stochastic localization scheme with pinnings and $t =  n-T_6  +1$. The random localized measure $\nu_t$ is in $\cM_t(\nu)$. By assumption, $1-\lambda_2(P_{\nu_t})\geq n^{-C_6}$ for a constant $C_6\geq 0$ since polynomial mixing time implies a lower bound on the spectral gap (cf.~\cite{levinperesmarkovchains}, Theorem 12.5). It suffices to bound
    \[
        \EE\bracc{\frac{\Var_{\nu_t}(\psi)}{\Var_\nu(\psi)}} = \EE\bracc{\prod_{k=0}^{t-1} \frac{\Var_{\nu_{k+1}}(\psi)}{\Var_{\nu_k}(\psi)}}
    \]
    uniformly over all functions $\psi: \{\pm 1\}^n \to \RR$.
    By the tower property, the expectation of this product can be bounded, if each factor $\EE\bracc{\frac{\Var_{\nu_{k+1}}(\psi)}{\Var_{\nu_k}(\psi)} \Bigm| \nu_k}$ can be bounded uniformly over the functions $\psi$ and in the realizations of the random measures $\nu_k$ (cf. Proposition 3.2 in~\cite{chen2022localizationschemesframeworkproving}).
    By an application of a Cauchy-Schwarz inequality and the explicit recursion $\nu_{k+1}\mid \nu_k$, it holds that
    \[
    \EE\bracc{\frac{\Var_{\nu_{k+1}}(\psi)}{\Var_{\nu_k}(\psi)} \Bigm| \nu_k} \geq 1- \frac{\|\Cor(\nu_k)\|_\op}{n-k} \geq 1-\frac{\kappa_k}{n-k}.
    \]
    For a proof of this statement we refer the reader to~\cite{chen2022localizationschemesframeworkproving}, Section 3.
    By combining the previous bounds, we conclude
    \[1-\lambda_2(P_\nu) \geq n^{-C_6}\prod_{k=0}^{t-1} \brac{1- \frac{\kappa_k}{n-k}}.\]
    As $\kappa_k \leq  C_5 $ for every $0 < k \leq t-1$, it holds that 
    \[1-\lambda_2(P_\nu) \geq n^{-C_6} \brac{\frac{(n- C_5 )! (n-t)!}{n! (n-t-C_5 )!}} \geq n^{-C_6} \brac{\frac{(n- C_5 )! ( T_6  -1)!}{n! ( T_6 - C_5  -1)!}} \geq n^{-(C_6+ C_5 )},\]
    where the expression $( T_6 - C_5 -1)!$ is well defined by the assumption $ 2   C_5  \leq  T_6$. A polynomial lower bound on the spectral gap implies a polynomial upper bound on the mixing time of the lazy version of the Markov chain (cf.~\cite{levinperesmarkovchains}, Theorem 12.4) when $\min_{x \in \{\pm1 \}^n}\nu(x) \geq e^{-C_4 n}$. 
\end{proof}

\subsection{Proof of Theorem~\ref{thm:smallsubmatricesimplymixing}}
Lemma~\ref{lemma:spectralindconditionsformixing} requires a bound on the correlation matrix of all pinned measures when many spins remain unpinned.
We show that every such measure is of the form~\eqref{eq:gibbsinlemmamanyquantifiers}.

\begin{proposition}\label{proposition:allpinningsgiveboundedexternalfields}
Let $C\geq 0$ and $\delta\in(0,1)$. Let $J\in\RR^{n\times n}$ symmetric such that $\|J\|\leq C$, and $h\in\RR^n$. Consider the quadratic Gibbs measure
       \[
    \nu(x) \propto \exp\left(\frac{1}2x^\top J x + h^\top x\right).
        \]
Let $\cP\subset [n]$ be a set of pinning coordinates of size $|\cP|\leq (1-\delta)n$ and $\tau_\cP\in\{\pm1\}^{\cP}$ be the pinned values. Let $\cS=\cP^c$ and $m=|\cS|$. The probability measure of $x_{\cS}\Bigm|x_{\cP}=\tau_\cP$ is a quadratic Gibbs measure on $\{\pm1\}^{\cS}$ of the form 
       \[
    \nu(x) \propto \exp\left(\frac{1}2x_{\cS}^\top J_{\cS,\cS} x_{\cS} + h_{\cS}^\top x_{\cS}+v^
    \top x_{\cS}\right),
        \]
with $\|v\|\leq C\sqrt{\frac{1-\delta}{\delta}}\sqrt{m}$.
\end{proposition}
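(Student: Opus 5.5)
Expanding the quadratic form on the split $[n]=\cS\sqcup\cP$ identifies the conditional measure and gives an explicit formula for $v$. Writing $x=(x_\cS,\tau_\cP)$ and using the symmetry of $J$,
\[
\tfrac12 x^\top J x + h^\top x = \tfrac12 x_\cS^\top J_{\cS,\cS}x_\cS + x_\cS^\top J_{\cS,\cP}\tau_\cP + \tfrac12\tau_\cP^\top J_{\cP,\cP}\tau_\cP + h_\cS^\top x_\cS + h_\cP^\top\tau_\cP .
\]
The terms $\tfrac12\tau_\cP^\top J_{\cP,\cP}\tau_\cP$ and $h_\cP^\top\tau_\cP$ do not depend on $x_\cS$, so they are absorbed into the normalizing constant of the conditional law. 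The conditional measure of $x_\cS$ given $x_\cP=\tau_\cP$ is therefore a quadratic Gibbs measure with interaction $J_{\cS,\cS}$, field $h_\cS$, and additional field
\[
v = J_{\cS,\cP}\tau_\cP .
\]

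It remains to bound $\|v\|$. The block $J_{\cS,\cP}$ is a submatrix of $J$, so its operator norm is at most $\|J\|\le C$. Since $\tau_\cP\in\{\pm1\}^\cP$, we have $\|\tau_\cP\|=\sqrt{|\cP|}$, and hence
\[
\|v\|\le C\sqrt{|\cP|}\le C\sqrt{(1-\delta)n}.
\]

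To express this in terms of $m$, use $m=n-|\cP|\ge \delta n$, so $n\le m/\delta$. This gives $\|v\|\le C\sqrt{(1-\delta)/\delta}\,\sqrt m$, which is the claimed bound.

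There is no real obstacle here. The only points to check are that the cross term appears with coefficient exactly one, which follows from the symmetry of $J$, and that the operator norm of a submatrix is bounded by that of the full matrix.
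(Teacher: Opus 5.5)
Your proof is correct and matches the paper's argument: the paper also reads off $v=J_{\cS,\cP}\tau_\cP$ from the block expansion and bounds $\|v\|\le \|J_{\cS,\cP}\|\,\|\tau_\cP\|\le C\sqrt{|\cP|}\le C\sqrt{(1-\delta)/\delta}\,\sqrt{m}$. You also spell out two checks that the paper leaves implicit: that symmetry of $J$ gives the cross term coefficient one, and that $n\le m/\delta$.
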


\begin{proof}
A straightforward computation shows that the vector $v$ is given by $v=J_{\cS,\cP}\tau_{\cP}$ and so $\|v\|\leq \|J_{\cS,\cP}\|\, \|\tau_{\cP}\| \leq C \sqrt{|\cP|} \leq C \sqrt{\frac{1-\delta}{\delta}}\sqrt{m}. $
\end{proof}

We are now ready to prove the Theorem~\ref{thm:smallsubmatricesimplymixing}: this is done by verifying the assumption of Lemma~\ref{lemma:spectralindconditionsformixing}. Firstly, if less than $\delta n$ spins remain unpinned, the small-submatrix condition places the conditional measure in a high-temperature regime. Secondly, if at least $\delta n$ spins remain unpinned, the external field forces spin alignment (even under a worst-case pinning, as per Proposition~\ref{proposition:allpinningsgiveboundedexternalfields}).
Lemma~\ref{lemma:highoverlapreplicas} then gives a uniform correlation bound.

\begin{proof}[Proof of Theorem~\ref{thm:smallsubmatricesimplymixing}]
    First, verify the lower bound on \[\min_{x \in \{\pm 1\}^n} \nu(x)  \geq \frac{\exp\brac{-n \brac{\frac{1}{2} \|J\| + \abs{\theta}}}}{2^n\exp\brac{ n \brac{\frac{1}{2} \|J\| + \abs{\theta}}}} \geq e^{-C_4 n}\] for $C_4 = \log(2)+\|J\|+2 \theta$.
    
    In the setting of Lemma~\ref{lemma:spectralindconditionsformixing}, set $T_6 = \lfloor \delta n \rfloor$. We study the correlation matrices of all pinned measures $\nu_\tau$ where $\tau$ is a pinning of the spins at indices $\cP_\tau \subseteq [n]$.
    First consider $k = \abs{\cP_\tau} > n-T_6 = (1-\delta) n$. Since $|\cP_\tau^c|\leq \delta n$, mixing is guaranteed for any such submeasure $\nu' \in \cM_k(\nu_{J,\theta \one_n})$ by the spectral condition $\|J_{\cP_\tau^c,\cP_\tau^c}\|_\op < \frac{1}{2}$ (cf.~\cite{eldan2022spectral},~\cite{chen2022localizationschemesframeworkproving}).

    It remains to bound the correlation matrices by a constant $C_5$ for all pinned measures with $0 \leq \abs{\cP_\tau} \leq n(1-\delta)$. We apply Lemma~\ref{lemma:manyquantifiers} to show that the assumptions of Lemma~\ref{lemma:highoverlapreplicas} are satisfied which yields a bound on the desired correlation matrices. This proof will imply that $T_6 > 2C_5$ so indeed all assumptions from Lemma~\ref{lemma:spectralindconditionsformixing} hold.
    By Proposition~\ref{proposition:allpinningsgiveboundedexternalfields}, a pinning $\tau$ of size $\abs{\cP_\tau} \leq n(1-\delta)$ induces an additional external field with $C_2 = C_1 \sqrt{\frac{1-\delta}{\delta}}$.

    To apply Lemma~\ref{corollary:manyquantifiers}, we choose $\theta$ such that~\eqref{eq:bound_theta} is satisfied. In particular, let
    \begin{align}\label{eq:finalchoiceoftheta}
        \theta > 12 \max\left\{ \|J\|_\op\brac{1+\frac{2}{\sqrt{\delta}} }, \|J\|_\op\brac{\frac{2 \sqrt{1-\delta}+\sqrt{\delta}}{\delta} }, \log\brac{\frac{5e}{\delta}}\right\}.
    \end{align}
    Let $\nu_\tau$ be the measure pinned on $\abs{\cP_\tau} \leq (1-\delta)n$ spins. Let $m \coloneqq n-\abs{\cP_\tau}$. 
    An application of Corollary~\ref{corollary:manyquantifiers} gives:
    With probability at least $ 1-\exp(-c_3m)$, $X \sim \nu_{\tau}$, conditioned on $X_i = \sigma_i$ for any $i \in \cP_\tau^c$, has at least $(1-\frac{\delta}{2})m$ positive spins. 
    A bound on the overlap follows by the fact that for $\rho(X,X') \leq (1-\delta)m $ it needs to hold that either $X$ or $X'$ have at least $\frac{\delta}{2}m$ negative  spins. Therefore, we can bound the overlap probability by
    \begin{align}\label{eq:overlapbound}
        &\PP_{X,X' \sim \nu_{\tau}}\Big(\rho(X,X') \leq (1-\delta)m \Bigm| X_i \neq X_i'\Big) \nonumber
        =  \PP_{X,X' \sim \nu_{\tau}}\Big(\rho(X,X') \leq (1-\delta)m \Bigm| X_i = 1, X_i'=-1\Big) \nonumber\\
        &\leq \PP_{X \sim \nu_{\tau}}\Big(\sum_{i =1}^m X_i \leq (1-\delta)m \Bigm| X_i=1\Big) + \PP_{X' \sim \nu_{\tau}}\Big(\sum_{i =1}^m X_i' \leq (1-\delta)m \Bigm| X_i'=-1\Big)\nonumber\\
        &\leq 2 e^{-c_3m}
    \end{align}
    where the final bound follows from Corollary~\ref{corollary:manyquantifiers}.

    The covariance matrices $\Cov(\nu_{2J_{\cS,\cS}})$ for $\abs{\cS} \leq \delta n$ can be bounded by the decomposition from~\cite{bb19} (cf.~\cite{chen2022localizationschemesframeworkproving}, Lemma 5.2) which yields
    \begin{align}\label{eq:covboundsmallsets}
        \|\Cov(\nu_{2J_{\cS,\cS}})\| \leq \frac{1}{1-2\|2J_{\cS,\cS}\|_\op} \leq 5
    \end{align}
    by the assumption on $\delta$.
    
    It remains to combine~\eqref{eq:overlapbound} and~\eqref{eq:covboundsmallsets} with Lemma~\ref{lemma:highoverlapreplicas}: For all $k=\abs{\cP_\tau} \leq (1-\delta)n$, the correlation matrices are bounded by
    \[\kappa_k \leq  5+ 2(n-k) e^{-c_3(n-k)}\]
    which can be bounded by a universal constant $C_5$, which can be assumed to be an integer, for $n$ sufficiently large.
    Therefore, an application of Lemma~\ref{lemma:spectralindconditionsformixing} gives fast mixing of Glauber dynamics when $\theta$ satisfies~\eqref{eq:finalchoiceoftheta}.
\end{proof}
\begin{remark}
    In general, no effort has been made to optimize constants. In any case, we point out that the dependency $\|J\|/\delta$ can be improved to $\|J\|/\sqrt{\delta}$ if one chooses $\delta_m=\delta \frac{n}{m}$ and $C_2 = C_1\sqrt{\frac{n-m}{m}}$ depending on $m$ throughout all proofs. 
\end{remark}

\subsection{Proof of Corollary~\ref{cor:appliedtoSK}}
In order to apply Theorem~\ref{thm:smallsubmatricesimplymixing} to the SK model, one simply needs to verify the conditions of Theorem~\ref{thm:smallsubmatricesimplymixing} with high probability on the GOE matrix $W$. This is achieved by a routine application of Gaussian concentration combined with a union bound.

\begin{proof}[Proof of Corollary~\ref{cor:appliedtoSK}]
    We want to show that for any $\beta$, there exists $\delta$ small enough (not depending on $n$) so that $\beta W$ satisfies the condition of Theorem~\ref{thm:smallsubmatricesimplymixing} with a spectrally bounded interaction matrix.

    It is enough to show the condition for subsets $\cS\subset[n]$ of size $\lfloor{\delta n\rfloor}$ as submatrices cannot have larger spectral norm. 
    
    Since $W_{\cS,\cS}$ is a GOE matrix, the fact that $\EE\lambda_{\max}(\sqrt{n}W_{\cS,\cS})=-\EE\lambda_{\min}(\sqrt{n}W_{\cS,\cS})\leq 2\sqrt{|\cS|}$, combined with Gaussian concentration, gives, for all $t\geq 0$, (see, e.g.,~\cite{MDSbook})
    \[
\PP \left( \| \beta W_{\cS,\cS} \| \geq 2\beta\sqrt{\frac{|\cS|}{n}} + \frac{t\beta}{\sqrt{n}} \right) 
= \PP \left( \| \sqrt{n} W_{\cS,\cS} \| \geq 2\sqrt{|\cS|} + t \right) \leq 2 \exp\left(-t^2/4\right).
    \]
    
    For $S = [n]$, this gives $\left\| \beta W\right\| \leq 3 \beta $ with probability at least $1-\exp(-n/4)$.
    
    For $|\cS|=\lfloor\delta n\rfloor$, taking $\delta\leq\frac{1}{400\beta^2}$, and taking $t=\frac{\sqrt{n}}{10\beta}$ we have
    \[
\PP \left( \| \beta W_{\cS,\cS} \| \geq \frac15 \right) \leq 2 \exp\left(-\frac{n}{400\beta^2}\right).
    \]
    By the union bound, noting that $\delta<\frac12$,
    \[
\PP \left( \max_{\substack{\cS\subseteq [n] \\ |\cS| = \lfloor \delta n\rfloor}} \| \beta W_{\cS,\cS} \| \geq \frac15 \right) \leq 2 {n \choose  \delta n} \exp\left(-\frac{n}{400\beta^2}\right)\leq 2 \exp\left(\delta n\log\left(e/\delta\right)-\frac{n}{400\beta^2}\right),
    \]
    where we used ${n \choose k}\leq (\frac{en}{k})^k$. Taking $\delta$ small enough so that $\delta\log(e/\delta)\leq \frac1{401\beta^2}$ establishes the bound.
\end{proof}

\section{Further Applications and Open Questions}\label{sec:otherapplicationsandquestions}

There are many high overlap distributions of interest in which the tools developed in this paper can potentially be used. Any quadratic Gibbs model with suitably small interaction principal submatrices can fit the conditions of Theorem~\ref{thm:smallsubmatricesimplymixing}. In particular, the proof of Corollary~\ref{cor:appliedtoSK} can be easily adapted for variants of the SK model where the interaction matrix has another distribution (as long as submatrices can be suitably controlled). 

Furthermore, our results can be extended to non-constant external fields $h \in \RR^n$ as long as sufficiently many entries of $h$ is large enough in absolute value. This setting tends to appear in statistical estimation problems with side information. As a model problem, we describe below the $\ZZ_2$ Synchronization, a problem closely related to the problem of Community Detection in the Stochastic Block Model.

\subsection{$\ZZ_2$-Synchronization with side information}

The $\ZZ_2$ synchronization problem (\cite{singer2011angular,abbe2014decoding,cucuringu2015synchronization}) is a statistical estimation problem where one aims to estimate a signal $x^\natural\in\{\pm1\}^n$, with prior uniform over the hypercube, from noisy measurements which here we assume to be Gaussian
\begin{equation}\label{eq:Z2Synch:quadraticmeasurement}
Y = \frac1n\lambda\, x^\natural \left(x^\natural\right)^\top + W, 
\end{equation}
where $\lambda>0$ corresponds to the Signal-to-Noise-Ratio (SNR) and $W$ is a GOE matrix. A routine calculation shows that the posterior of the signal given the measurements is given by
\begin{equation}\label{eq:Z2posterior}
\Pr\left( x \bigm| Y \right) \propto \exp\left( 
\frac{\lambda}2x^\top Y x\right)
=
\exp\left( 
\frac{\lambda}2x^\top W x + \frac{\lambda^2}{2n}\left(x^\top x^\natural\right)^2 \right).
\end{equation}

Several sampling algorithms have been studied for this problem. Algorithmic stochastic localization in Wasserstein distance for $\lambda$ sufficiently large, but constant~\cite{montanari2023posterior}.
For a slightly modified version of the posterior, at higher temperature, it has been established that Glauber dynamics enters weak recovery regions after polynomial time \cite{liu2024locally} (several guarantees of recovery are also known for many algorithms when $\lambda >1$). 
It remains an open question whether Glauber dynamics mixes at a large SNR.

Let us consider a variant of this problem with side information\footnote{Various forms of side information models are studied in  Community Detection in the Stochastic Block Model, a notable example being the contextual SBM~\cite{Deshpandeetal:contextualSBM}, the simple model we describe here is a special case of the models in~\cite{BarbierReeves2020MultiviewSpikedMatrix}.} where, besides~\eqref{eq:Z2Synch:quadraticmeasurement}, one also has access to spin-level noisy measurements and an SNR $\eta\geq 0$,
\begin{equation}\label{eq:Z2Synch:linearmeasurements}
y = \eta \, x^\natural + \xi, 
\end{equation}
where $\xi\sim\cN(0,1)$, independent from the GOE matrix $W$.

The posterior is then given by
\[
\Pr\left( x \bigm| Y,y \right) \propto 
\exp\left( 
\frac{\lambda}2x^\top W x + \frac{\lambda^2}{2n}\left(x^\top x^\natural\right)^2 + \eta\,x^\top y \right)
= 
\exp\left( 
\frac{\lambda}2x^\top W x + \frac{\lambda^2}{2n}\left(x^\top x^\natural\right)^2 +  \eta^2\,x^\top x^\natural +  \eta\,x^\top \xi \right).
\]
In order to understand the spectral gap of Glauber Dynamics for this Gibbs measure we can, without loss of generality, take $x^\natural = \one$. In this case we have to consider the quadratic Gibbs measure
\begin{equation}\label{eq:QuadraticGibbsMesaurefromZ2SynchwithSideInfo}
    \nu(x) \propto \exp\left( \frac12 \, x^\top\left( \lambda W + \frac{\lambda^2}{n}\one\one^\top \right) x +  \eta^2\,x^\top \one +  \eta\,x^\top \xi \right),
\end{equation}
in the hypercube, for $W$ a GOE matrix and $\xi\sim\cN(0,\mathrm{I})$.

The following mixing guarantee can then be established with our tools.

\begin{corollary}\label{cor:Z2withSideInformation}
    For any SNR $\lambda>0$ and $n$ large enough, there exists $\eta\geq 0$, which may depend on $\lambda$ but not on $n$, such that Glauber Dynamics for the posterior of $\ZZ_2$ Synchronization with side information~\eqref{eq:QuadraticGibbsMesaurefromZ2SynchwithSideInfo} (where $W\sim\mathrm{GOE}$ and $\xi\sim\mathcal{N}(0,\mathrm{I})$), mixes, in total variation distance, in time polynomial in $n$ with high probability.
\end{corollary}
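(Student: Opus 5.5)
We reduce to the same scheme as the proof of Theorem~\ref{thm:smallsubmatricesimplymixing}: verify the hypotheses of Lemma~\ref{lemma:spectralindconditionsformixing} using Lemma~\ref{lemma:highoverlapreplicas} for pinnings with many unpinned spins, and a high-temperature argument for pinnings with few unpinned spins. Two new features must be handled. The interaction matrix $J=\lambda W+\frac{\lambda^2}{n}\one\one^\top$ has a rank-one part, and the field $\eta^2\one+\eta\xi$ is not constant.

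\textbf{Interaction matrix.} We have $\|J\|\le 3\lambda+\lambda^2$ with high probability. For principal submatrices with $|\cS|\le\delta n$, the rank-one part contributes $\frac{\lambda^2}{n}|\cS|\le\lambda^2\delta$. The GOE part is handled exactly as in the proof of Corollary~\ref{cor:appliedtoSK}, with $\beta=\lambda$. Choosing $\delta$ small, depending only on $\lambda$, gives $\max_{|\cS|\le\delta n}\|J_{\cS,\cS}\|\le\frac15$ with high probability. This simultaneously gives $\|\Cov(\nu_{2J_{\cS,\cS}})\|\le 5$ and fast mixing of all pinnings with at most $\delta n$ free spins, by the spectral condition.

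\textbf{Field.} Write the field as $\theta\one+v_0$ with $\theta=\eta^2/2$ and $v_0=\frac{\eta^2}{2}\one+\eta\xi$. The direct route fails: $\|\eta\xi\|\approx\eta\sqrt n$, which after absorption into $v$ competes with $\theta=\eta^2/2$ in the threshold $\theta>4C_2/\sqrt\delta$. Since $\eta^2\gg\eta$, this is nonetheless fine for large $\eta$.

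Instead we take the external field to be $\eta^2\one$ and $v=\eta\xi+J_{\cS,\cP}\tau_\cP$, so $\|v\|\le(2\eta+C_1\sqrt{(1-\delta)/\delta})\sqrt m$ for every pinning with $m\ge\delta n$ free spins. Here we use $\|\xi_\cS\|\le\|\xi\|\le 2\sqrt n\le 2\sqrt{m/\delta}$ with high probability, so the constant is really $2\eta/\sqrt\delta$. Lemma~\ref{lemma:manyquantifiers} and Corollary~\ref{corollary:manyquantifiers} require
\[
\eta^2>4\max\left\{\frac{C_2}{\sqrt\delta},\Big(1+\frac{2}{\sqrt\delta}\Big)C_1,\log\frac{5e}{\delta}\right\},\qquad C_2\lesssim\frac{\eta}{\sqrt\delta}+C_1\delta^{-1/2}.
\]
Since the left side is quadratic in $\eta$ and the $\eta$-dependence on the right is linear, a sufficiently large $\eta=\eta(\lambda)$ works. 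Note that $\delta$ is fixed first, depending only on $\lambda$.

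\textbf{Conclusion.} Lemma~\ref{lemma:highoverlapreplicas} then yields $\kappa_k\le 5+2me^{-c_3m}$, uniformly bounded for all $k\le(1-\delta)n$. The lower bound $\min_x\nu(x)\ge e^{-C_4n}$ needs $\|\xi\|_1\lesssim n$, which holds with high probability. Lemma~\ref{lemma:spectralindconditionsformixing} then gives polynomial mixing.

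\textbf{Main obstacle.} The delicate step is the field bookkeeping. The random part $\eta\xi$ scales with $\eta$, so we must check that the quadratic growth of the deterministic part $\eta^2$ beats it inside Lemma~\ref{lemma:manyquantifiers}. We must also check that the high-probability events on $W$ and $\xi$ are fixed once and hold simultaneously for all pinnings; they do, since only $\|\xi\|$ and submatrix norms of $W$ enter.
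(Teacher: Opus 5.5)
Your proposal is correct and follows essentially the paper's own sketch: you take $\theta=\eta^2$, absorb $\eta\xi$ together with the pinning field $J_{\cS,\cP}\tau_\cP$ into $v$ so that $C_2\lesssim C_1\sqrt{(1-\delta)/\delta}+\eta\|\xi\|/\sqrt{\delta n}$ grows only linearly in $\eta$, control small principal submatrices by the GOE union bound of Corollary~\ref{cor:appliedtoSK}, and conclude via Lemmas~\ref{lemma:highoverlapreplicas} and~\ref{lemma:spectralindconditionsformixing}. You are slightly more careful than the paper about the rank-one contribution $\lambda^2\delta$ to $\|J_{\cS,\cS}\|$ and about $\min_x\nu(x)\ge e^{-C_4n}$ via $\|\xi\|_1\lesssim n$; however, your discarded first split with $\theta=\eta^2/2$ and $\frac{\eta^2}{2}\one$ inside $v_0$ is muddled (it would violate \eqref{eq:bound_theta}, since then $C_2\ge\eta^2/2$) and should simply be deleted.
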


While we skip the details of this proof here, it closely follows that of Corollary~\ref{cor:appliedtoSK}. One does need to replace $\nu_{J,\theta \one_n}$ in Theorem~\ref{thm:smallsubmatricesimplymixing} by $\nu_{J,\theta \one_n + \eta \xi}$.
To adapt the proof, apply Lemma~\ref{lemma:manyquantifiers} and Corollary~\ref{corollary:manyquantifiers} for any pinned measure of~\eqref{eq:QuadraticGibbsMesaurefromZ2SynchwithSideInfo} with $C_1 \leq \|\lambda W + \frac{\lambda^2}{n}\one_n \one_n^\top \|$  and $C_2 \leq C_1 \sqrt{\frac{1-\delta}{\delta}} + \eta \sqrt{\frac{1}{\delta n}} \|\xi\|$. With high probability, it holds that $C_1 \leq 2\lambda + \lambda^2 +\epsilon$ for a small $\epsilon >0$. If $\lambda <\frac{1}{10}$, the spectral condition yields a direct proof, while for all other $\lambda$, it suffices to take $\delta$ such that $\delta \log(e/\delta) \leq \frac{1}{400 \lambda^2}$ by an analogous argument to the one from Corollary~\ref{cor:appliedtoSK}. 
As the required $\theta$ from~\eqref{eq:bound_theta} grows linearly in $C_2$, there exists $\eta$ such that $\theta = \eta^2$ and $\theta > 12 \max\left\{\frac{C_2}{\sqrt{\delta}}, \brac{1+\frac{2}{\sqrt{\delta}}}C_1,\log\left(\frac{5e}{\delta}\right)\right\}$, with high probability over $W$ and $\xi$.

\section*{Acknowledgments and use of AI}
AE would like to thank Frederic Koehler for sharing with him, sometime in the fall of 2022, the conjecture that for any temperature, the SK model should enjoy fast coldstarted Glauber mixing at strong enough magnetization. ASB and AR would like to thank Yuansi Chen for numerous insightful discussions on the topic in this paper. AR would like to thank Roland Bauerschmidt for an instructive discussion on this topic, and for pointing out the connections with~\cite{percus1975correlation}.

We used modern AI tools in this work, primarily ChatGPT Pro 5.5. 
In fact, what are (in our view) the two key ingredients in our argument were found by ChatGPT: One is the realization that it is not too lossy to attempt to control correlations matrices for all possible pinnings (even though some pinnings do flip the magnetization of remaining spins, they do so of only very few); the other is the use of the high overlap replica trick to control correlations by correlations of small submeasures (which we learned from Roland dates back to the 70s).

The paper was written entirely by the authors. AI was used to scan the manuscript
for typos and inconsistencies in style and notation. All errors are ours.

AE was supported by the National Science Foundation grant DMS-2450867.
% ------------------------------------------------------------
% Bibliography
% ------------------------------------------------------------

\bibliographystyle{alpha}
\bibliography{references}

\end{document}